\documentclass[EPiCempty]{easychair}

\usepackage[utf8]{inputenc}
\usepackage{amsmath,amssymb,mathtools}
\usepackage[table]{xcolor} % row/column colors in tables (\rowcolor, \cellcolor)
\usepackage{graphicx}
\usepackage{booktabs}
\usepackage{tabularx} % flexible-width tables to avoid overflow
\usepackage{caption} % \captionof for non-floating figures/tables
\usepackage{placeins} % provides \FloatBarrier
\usepackage{float}   % provides [H] “place exactly here” float specifier
\usepackage{cite}
\usepackage{tikz}
\usetikzlibrary{arrows.meta,calc,positioning}
\usepackage{colortbl}

\theoremstyle{plain}
\newtheorem{theorem}{Theorem}
\newtheorem{proposition}[theorem]{Proposition}
\newtheorem{lemma}[theorem]{Lemma}

\theoremstyle{remark}
\newtheorem{remark}[theorem]{Remark}
\theoremstyle{plain}

\newcommand{\R}{\mathbb{R}}
\newcommand{\N}{\mathbb{N}}
\newcommand{\E}{\mathbb{E}}
\newcommand{\inner}[2]{\langle #1, #2\rangle}
\newcommand{\Coop}{C}

\definecolor{qsblue}{HTML}{0072B2}   % OI blue
\definecolor{qsorange}{HTML}{E69F00} % OI orange
\definecolor{qsred}{HTML}{D55E00}    % OI vermillion
\definecolor{qsgreen}{HTML}{009E73}  % OI bluish green
\definecolor{qspurple}{HTML}{CC79A7} % OI reddish purple
\definecolor{qsgray}{HTML}{666666}   % neutral, for rules and inert elements

\tikzset{
  qsbox/.style={draw=qsgray, rounded corners=2pt, fill=black!2,
    align=center, inner xsep=7pt, inner ysep=5pt},
  qsarrow/.style={-{Stealth[length=2.2mm]}, line width=0.85pt},
  qsfast/.style={qsarrow, draw=qsorange},
  qsslow/.style={qsarrow, draw=qsblue},
  qsfeedback/.style={-{Stealth[length=2.0mm]}, draw=qsgreen,
    densely dashed, line width=0.85pt}
}

\title{Computing Extinction Barriers in a
Quorum-Sensing Reaction Network}

\author{
Mario Ayala
\and
Johannes Zimmer
}

\institute{
  School of Computation, Information and Technology,
  Technische Universit\"at M\"unchen,
  Boltzmannstra{\ss}e~3, 85748 Garching, Germany\\
  \email{mario.ayala@tum.de, jz@tum.de}
}

\authorrunning{Ayala and Zimmer}
\titlerunning{Computing Extinction Barriers}

\begin{document}
\raggedbottom % avoid large vertical stretch around floats (prevents big white gaps)
\maketitle

\begin{abstract}
We introduce a simple reaction-network model of a quorum-sensing population that couples the cell density $x$ to the signal density $w$. In the four-channel cell--signal network studied here, scaling signal production and removal by the same factor $r$ leaves all deterministic equilibria, and their stability types, unchanged. Nevertheless, we show that $r$ shifts the quasipotential barrier $\Delta V(r)$ for rare transitions towards extinction, and thus, under metastable exit assumptions, the mean time to reach a fixed neighbourhood of the extinction state on the exponential scale $e^{N\Delta V(r)}$.

We compute the barrier by minimization of the path action with the signal retained as a fluctuating coordinate, and we compare it with exact stochastic simulation of population-threshold crossing times regressed in \(N\). Over a range of \(r\), the minimum-action barriers satisfy
\[
\Delta V(r)=\Delta V_\infty+O(1/r),
\]
where \(\Delta V_\infty\) is obtained by eliminating the signal first. At \(r=1\), the barrier is \(55\%\) larger than \(\Delta V_\infty\). The saddle barrier \(\Delta V(r)\) is also the least action needed to enter the basin of extinction, but the density threshold can be crossed more cheaply: at \(r=0.5\) the cheapest crossing costs \(7.7\%\) less action, keeps the signal high, and is usually followed by recovery. Simulated arrival times in this neighbourhood, which include failed attempts, grow with slopes within two fitted standard errors of the saddle barrier at every tested rate, and within \(0.004\) of it if the logarithmic prefactor term is omitted. Under either regression model they exclude \(\Delta V_\infty\) at \(r\le2\) by at least \(4.4\) fitted standard errors, without using the action solver.
\end{abstract}

%=====================================================================
\section{Introduction}
\label{sec:intro}

Quorum sensing (QS) is a form of chemical communication in which cells release diffusible signalling molecules and respond to their concentration by altering gene expression. Among many others, QS regulates behaviours such as biofilm formation, exoenzyme production and bioluminescence, linking cellular activity to the surrounding population and its chemical environment \cite{miller2001quorum,waters2005quorum}. When these activities are costly to perform but benefit neighbouring cells, their regulation also shapes microbial cooperation \cite{diggle2007cooperation}.

The dynamics of this communication depend on both signal production and removal. Their speed introduces a timescale distinct from population growth and decline, raising the question of whether it can alter population persistence even when it leaves the deterministic equilibria in place. We address this question using a minimal, well-mixed stochastic model that explicitly retains both producing cells and the signal particles they release. More precisely, we investigate how this speed affects rare population extinction when the deterministic equilibrium locations and their stability types remain unchanged.

\subsection*{The model and the question}

In this work, instead of starting from a population-level ODE description, we formulate our model from basic biological events, as is standard in stochastic population biology and reaction-network theory~\cite{fournier2004microscopic,andersonkurtz2015}. Every cell is a producer, and cell division, cell death,
signal production and signal removal define a continuous-time Markov jump
process for the cell and signal counts $n^\Coop(t)$ and $n^w(t)$. At
population scale $N$, the density process is
\[
 U^N(t)=\bigl(X^N(t),W^N(t)\bigr)
 :=\left(\frac{n^\Coop(t)}{N},\frac{n^w(t)}{N}\right).
\]
Throughout, we write elements of $\R^2$ as tuples, i.e. $U=(x,w)$, and
identify them with column vectors, hence
$\dot U=F(U)$ below is an equation between column vectors and, for
$v\in\R^2$, $vv^{\!\top}$ is a $2\times2$ matrix.
At a given density value $(x,w)$, the increments and total rates of the four events are as follows.
\begin{center}
\small
\renewcommand{\arraystretch}{1.1}
\begin{tabular}{@{}lll@{}}
\toprule
\textbf{event} & \textbf{density increment} & \textbf{total rate}\\
\midrule
cell division     & $(1/N,0)$  & $Nx\,b(w)$\\
cell death        & $(-1/N,0)$ & $Nx\,d(x)$\\
signal production & $(0,1/N)$  & $Nr\alpha_\Coop x$\\
signal removal    & $(0,-1/N)$ & $Nr\kappa w$\\
\bottomrule
\end{tabular}
\end{center}
Here $b(w)$ is the division rate, which increases with the signal through a
Hill response, and $d(x)$ is the death rate, which includes a production cost
and crowding. The constant $\alpha_\Coop$ is the signal production rate per
cell, $\kappa$ is the signal removal coefficient, and $r>0$ multiplies both
signal channels. Hence $r$ sets the speed of the signal dynamics while
preserving the balance between production and removal at a fixed cell
density. Raising $r$ raises production while the cost $c$ is held fixed, so
varying $r$ is a controlled mathematical experiment on the timescale of the
signal, not an experimentally independent change of signal turnover; in a
model where the cost scales with production, $c$ and $r$ would be coupled. Section~\ref{sec:network} gives the closed forms of $b$ and $d$, the
generator, the biological rationale for the rates, and the dimensionless
parameter values used below. These values are illustrative; our study is
methodological rather than organism-specific.

\subsection*{One process, three regimes}

At fixed $r$, the event rules above supply three complementary
large-population descriptions: the law of large numbers (LLN), the central
limit theorem (CLT) and the large-deviation principle (LDP), given by
\begin{equation}
 \label{eq:intro-regimes}
 \begin{aligned}
 \text{LLN:}\quad
   &U^N\longrightarrow U,
   &&\dot U=F(U),\\
 \text{CLT:}\quad
   &\sqrt N\,(U^N-U)\Longrightarrow Z,
   &&dZ=\nabla F(U)\,Z\,dt+D(U)^{1/2}dW,\\
 \text{LDP:}\quad
   &\mathbb P(U^N\approx\phi)
     \asymp \exp\{-N\mathcal S_T(\phi)\},
   &&\mathcal S_T(\phi)=\int_0^T\mathbb L(\phi,\dot\phi)\,dt.
 \end{aligned}
\end{equation}
Here $F$ is the mean-field drift (written out below), $\nabla F$ its linearization along the
deterministic solution, $D$ the diffusion matrix the reaction channels supply,
$W$ a standard planar Brownian motion, and $\mathbb L$ the Legendre transform
of the limiting Hamiltonian. The LLN gives the deterministic trajectory, the
CLT resolves fluctuations of order $N^{-1/2}$ around it, and the LDP assigns
to a \emph{rare} path $\phi$ a probability on the
exponential scale $\exp\{-N\mathcal S_T(\phi)\}$
\cite{feng2006large,elgart2004rare}.

The descriptions in \eqref{eq:intro-regimes} retain different information about the event rules.
The deterministic drift combines opposing microscopic mechanisms through their net rates,
whereas the fluctuation diffusion and path action also depend on the
individual rates. Hence neither the deterministic drift nor its equilibria,
described below, determine the cost of extinction.

In order to explain this in more detail, we start from the LLN. By Kurtz's theorem \cite{kurtz1970solutions} (Theorem~\ref{thm:lln} in this work), $U^N$ converges to a deterministic limit $U=(x,w)$ satisfying the following ODE system
\[
 \dot U = F(U)
 =
 \begin{pmatrix}
 x\bigl[b(w)-d(x)\bigr]\\
 r\bigl[\alpha_\Coop x-\kappa w\bigr]
 \end{pmatrix}.
\]
Therefore, at equilibrium, the signal satisfies
\[
 w=\bar w(x):=\frac{\alpha_\Coop x}{\kappa},
\]
and every positive equilibrium solves $b(\bar w(x))=d(x)$, independently
of $r$. For the bistable parameters studied here, the system has three
equilibria: the extinction state $U_{\mathrm{off}}=(0,0)$, a positive saddle
state $U_\ast=(x^\ast,\bar w(x^\ast))$, and a stable cooperative state
$U_{\mathrm{on}}=(x_{\mathrm{on}},\bar w(x_{\mathrm{on}}))$.
Proposition~\ref{prop:r-independence} shows that all three
equilibria retain their stability types as $r$ varies. Nevertheless, their
relaxation rates, trajectories and basins need not remain unchanged.

\subsection*{Extinction}
In this work, by population extinction we mean a transition from the stable
cooperative state $U_{\mathrm{on}}$ towards $U_{\mathrm{off}}$. The deterministic attraction basins of these states are
separated by a curve, the separatrix, whose location in general depends on $r$,
even though the equilibria themselves do not. The transition therefore involves the
joint cell-signal state and cannot be identified by a cell-density threshold
alone.

In order to measure this transition, we choose a fixed small neighbourhood
$R_{\mathrm{off}}$ (see \eqref{eq:def-R-off})
 of the extinction state $U_{\mathrm{off}}$, contained in its
deterministic attraction basin, and define, for the process started near
$U_{\mathrm{on}}$,
\begin{equation}
\label{eq:collapse-time-off}
\tau_N^{\mathrm{off}}(r)
:=\inf\{t\geq0:U^N(t)\in R_{\mathrm{off}}\},
\end{equation}
i.e. the first arrival in a region from which the deterministic evolution tends
to $U_{\mathrm{off}}$. We call $\tau_N^{\mathrm{off}}(r)$ the extinction time. Entry into
$R_{\mathrm{off}}$ is not yet cell extinction, $X^N=0$, and does not exclude
subsequent stochastic recovery. We quantify persistence by means of the
mean of this stopping time and its dependence on the population scale $N$. The
target set is specified in more detail in Section~\ref{sec:off-target}. The justification for
using the saddle action as the transition barrier, and the assumptions needed
to connect that barrier to mean extinction times, are given in
Section~\ref{sec:barrier}. Section~\ref{sec:ssa} states which stopping times are measured by simulation.

%%%%%%%%%%%%%%%%%%%%%%%%%%%%%%%%
For large $r$, it is natural at the deterministic level to replace the
signal by $\bar w(x)$. Whether this reduction also preserves the probability
of rare extinction is a separate question. At finite $r$, a fluctuating path
towards extinction can explore the joint cell-signal plane: the signal need not
remain at its quasi-steady value while the cell density falls. The quantity
we compute measures the cost of this rare transition from the cooperative
state $U_{\mathrm{on}}$ to the saddle $U_\ast$, which is the least-cost
entrance to the basin of extinction (Section~\ref{sec:barrier}). Figure~\ref{fig:mechanism} sketches the basic
mechanism.

\begin{figure}[tbp]
\centering
\begin{tikzpicture}[x=0.73cm,y=0.68cm,font=\small]
  \draw[-{Stealth[length=2mm]},draw=qsgray] (0,0) -- (6.9,0);
  \node[below] at (5.75,0) {cell density $x$};
  \draw[-{Stealth[length=2mm]},draw=qsgray] (0,0) -- (0,4.8);
  \node[rotate=90] at (-0.60,2.35) {signal density $w$};
  \draw[densely dashed,draw=qsgray,line width=0.8pt]
    (0.45,0.45) -- (6.75,4.03);
  \node[below right=-1pt] at (3.45,2.15) {$w=\bar w(x)$};

  \coordinate (ustar) at (2.0,1.33);
  \coordinate (uon) at (6.0,3.60);
  \filldraw[fill=white,draw=black,line width=0.9pt] (ustar) circle (2.3pt);
  \filldraw[fill=black,draw=black] (uon) circle (2.3pt);
  \node[below right=-1pt] at (ustar) {$U_\ast$};
  \node[above right=-1pt] at (uon) {$U_{\mathrm{on}}$};

  \draw[-{Stealth[length=2.1mm]},draw=qsgray,line width=1.05pt]
    (uon) -- (ustar);
  \draw[-{Stealth[length=2.2mm]},draw=qsred,line width=1.25pt]
    (uon) .. controls (5.20,4.43) and (3.10,3.44) .. (ustar);

  \draw[-{Stealth[length=1.8mm]},draw=qsblue,line width=0.9pt]
    (3.05,1.93) -- (4.25,2.61);
  \node[qsbox,text width=5.1cm,align=left] at (10.85,2.55) {
    \tikz\draw[-{Stealth[length=1.8mm]},draw=qsred,line width=1.2pt]
      (0,0)--(0.72,0);\; finite-$r$ minimum-action path\\[3pt]
    \tikz\draw[-{Stealth[length=1.8mm]},draw=qsgray,line width=1.0pt]
      (0,0)--(0.72,0);\; eliminated-signal path\\[3pt]
    \tikz\draw[-{Stealth[length=1.8mm]},draw=qsblue,line width=0.9pt]
      (0,0)--(0.72,0);\; deterministic flow\\[5pt]
    % \centering signal lag: $w>\bar w(x)$\\[2pt]
    % \centering $\Delta V(r)>\Delta V_\infty$
  };
\end{tikzpicture}
\caption{Why signal elimination changes the barrier without changing the
deterministic equilibria. Both paths run from $U_{\mathrm{on}}$ to the saddle
$U_\ast$, the endpoint of the barrier (Section~\ref{sec:barrier}). Elimination
confines the path to the dashed relation $w=\bar w(x)$; at finite $r$ the
signal lags above it, so the minimum-action path joins the same endpoints
through different states.}
\label{fig:mechanism}
\end{figure}

We denote the least action from $U_{\mathrm{on}}$ to $U_\ast$ by
$\Delta V(r)$, the quasipotential barrier; its precise definition is given
in Section~\ref{sec:barrier}. Subject to metastable exit estimates for the
jump process, which we state but do not prove, this barrier governs the mean
of the extinction time \eqref{eq:collapse-time-off} on the exponential scale
\[
 \E[\tau_N^{\mathrm{off}}(r)]\asymp \exp\{N\Delta V(r)\}.
\]
We compare it with $\Delta V_\infty$, the barrier obtained by eliminating
the signal before constructing the reduced path action.

\FloatBarrier

\subsection*{Contribution and main findings}

The limit theorems and the large-deviation principle are established tools.
Our contribution is a calibrated barrier computation for this four-channel
network, checked independently: we minimize the path action with the signal
retained as a fluctuating coordinate, show that the saddle barrier is the least
interior action needed to enter the basin of extinction
(Proposition~\ref{prop:saddle-off}), and compare with exact stochastic
simulation, which uses no action solver. The minimum-action results are
compared with the expansion
\begin{equation}
 \label{eq:crossover-law}
 \Delta V(r)=\Delta V_\infty+\frac{A}{r}+O(r^{-2}),
 \qquad r\to\infty,
\end{equation}
whose coefficient $A$ is predicted by a boundary-layer calculation
in~\cite{ayala2026cooperation}. We take this prediction as externally supplied
and do not derive it here. Over the values of $r$ studied, the minimum-action
results support the $1/r$ correction, and at $c=0.36$ and $r=1$ the computed
barrier is $55\%$ larger than $\Delta V_\infty$.

A direct measurement of the extinction time gives slopes within two fitted
standard errors of the saddle barrier at every rate, or within $0.004$ of it if
the logarithmic prefactor term is omitted, and under both regression models it
excludes $\Delta V_\infty$ at $r=0.5,1,2$ by at least $4.4$ fitted standard
errors. The density threshold $x^\ast$ behaves differently: at $r=0.5$ a
threshold point with higher signal costs $7.7\%$ less action than the saddle,
and in simulation most threshold crossings at this rate are followed by
recovery. Since the barrier enters the mean extinction time multiplied by $N$,
an $r$-dependent barrier changes the mean extinction time by a factor
exponential in $N$, although the equilibria do not move.

Rare events in systems with a fast and a slow component have been studied
from several sides. Large deviations of fast-slow systems are analysed
in~\cite{bouchet2016large}, the elimination of fast stochastic variables at the
level of the linear noise approximation in~\cite{thomas2012rigorous}, and the
breakdown of fast-slow reductions for escape rates in~\cite{newby2013breakdown}.
In population dynamics, WKB methods for extinction are reviewed
in~\cite{assaf2017wkb}, and~\cite{khasin2009extinction} shows that an extinction
rate can change exponentially under a small change of the parameters. Relative
to these works, the present paper gives a quantified example in a
quorum-sensing network: the barriers at finite $r$, the size of the error made
by eliminating the signal first, and the comparison between the threshold, the
recovery and the basin pictures of extinction.

\subsection*{Outline}

Section~\ref{sec:network} gives the reaction-network formulation and
parameters. Section~\ref{ssec:limits} develops the deterministic dynamics,
Gaussian fluctuations and large deviations.
Section~\ref{sec:barrier} defines the extinction barrier, relates it to the
basin of $U_{\mathrm{off}}$, and obtains the eliminated-signal benchmark
$\Delta V_\infty$.
Section~\ref{sec:numerics} presents the action minimization and
simulation-based estimation, Section~\ref{sec:results} compares their
results, and Section~\ref{sec:conclusion} discusses the conclusions and
remaining limitations.

%=====================================================================
\section{The reaction network}
\label{sec:network}

\subsection{Events, state, and generator}
\label{ssec:biology}

We now describe in detail the reaction network introduced in Section~\ref{sec:intro}. Signal
increases the cell division rate through a sigmoidal response, production
contributes a private cost to cell death, and crowding limits growth.
Particles are created and destroyed one at a time, and no event changes both
populations at the same time. The factor $r$ multiplies production and removal
together, so it changes the speed of the signal dynamics without changing the
stationary signal level (Proposition~\ref{prop:r-independence}).

The state is the density $U=(x,w)$ of the introduction, with $x=n^\Coop/N$
and $w=n^w/N$, and the system is assumed well mixed throughout. Each of the
four distinct mechanisms changes exactly one coordinate of $U$ by $1/N$, while the total
event rate is of order $N$. The resulting density process
has a deterministic large-population limit and rare-event probabilities on the
exponential scale $N$.

We model these mechanisms as the reaction channels of a stochastic reaction
network~\cite{andersonkurtz2015}: a channel $\varrho$ is one reaction,
specified by a fixed increment $\Delta_\varrho$ and a rate density
$\beta_\varrho$. The time-evolution of the state $U$ is therefore a
continuous-time Markov jump process with state space
$(N^{-1}\N_0) \times (N^{-1}\N_0)$: each channel carries an exponential clock
of rate $N\beta_\varrho(U)$ depending on the current state $U$, and its firing
moves the state by a fixed increment
\[
 U\longmapsto U+\frac{\Delta_\varrho}{N} .
\]
The channels, their increments, and their rate densities are summarized in
Figure~\ref{fig:network}.

\begin{figure}[tbp]
\centering
\begin{tikzpicture}[x=1cm,y=1cm,font=\small]
  \node[circle,draw=qsblue,line width=1pt,fill=qsblue!8,
    minimum size=12mm,align=center] (cell) at (3.15,1.75)
    {$\Coop$\\[-2pt]{\scriptsize cells $x$}};
  \node[circle,draw=qsorange,line width=1pt,fill=qsorange!10,
    minimum size=12mm,align=center] (signal) at (8.15,1.75)
    {$w$\\[-2pt]{\scriptsize signal}};
  \node (sinkc) at (0.25,-0.10) {$\varnothing$};
  \node (sinkw) at (11.05,-0.10) {$\varnothing$};

  \draw[qsslow] (cell.145) .. controls +(0.0,1.40) and +(0.0,1.40) ..
    node[above,align=center] {division $\Coop\to2\Coop$\\
      $xb(w)$, $\Delta=(+1,0)$} (cell.35);
  \draw[qsslow] (cell.south west) --
    node[midway,above left=1pt,align=center] {death\\
      $xd(x)$, $\Delta=(-1,0)$} (sinkc);
  \draw[qsfast] (cell.east) --
    node[below=2pt,align=center] {production $\Coop\to\Coop+w$\\
      $r\alpha_\Coop x$, $\Delta=(0,+1)$} (signal.west);
  \draw[qsfast] (signal.south east) --
    node[midway,above right=1pt,align=center] {removal\\
      $r\kappa w$, $\Delta=(0,-1)$} (sinkw);
  \draw[qsfeedback] (signal.north west) to[bend right=20]
    node[above,align=center,text=qsgreen] {signal-enhanced\\division $b(w)$}
    (cell.north east);

  \node[draw=qsorange,fill=qsorange!6,rounded corners=2pt,
    inner xsep=7pt,inner ysep=3pt,align=center] at (6.8,-0.70)
    {$r$ multiplies both signal channels};
\end{tikzpicture}
\caption{The four stochastic channels and their feedback. Blue arrows change
the cell count and orange arrows change the signal count. The dashed arrow is
regulation, not an additional jump: signal enters the division rate $b(w)$.}
\label{fig:network}
\end{figure}

Notice that our model is microscopic: it specifies which events occur and at what
rate, and it imposes nothing at the population level. The dynamics of $U$ are best described in terms of its infinitesimal generator, which acts
on functions of the density state by
\begin{equation}
 \label{eq:generator}
 (\mathfrak L_Nf)(U)
 =\sum_{\varrho}N\beta_\varrho(U)
  \left[f\!\left(U+\frac{\Delta_\varrho}{N}\right)-f(U)\right],
\end{equation}
i.e. the time-evolution of $U$ is a density-dependent Markov chain in the sense of
Kurtz~\cite{kurtz1970solutions,ethier2009markov}. Everything computed below is obtained from \eqref{eq:generator}: the
large-population descriptions of Section~\ref{ssec:limits} are consequences of it
rather than starting points, and so is the barrier of
Section~\ref{sec:barrier}.

\subsection{Rates and parameters}
\label{ssec:params}
Each of the three functional forms below is chosen for a reason rather than
for tractability. The division response is of Hill type because quorum-sensing
regulation is cooperative: a receptor of LuxR type binds the signal and the
complex multimerizes before it binds the promoter, so the response to signal
is switch-like rather than graded, and the Hill exponent $h$ is what carries
that cooperativity~\cite{miller2001quorum,waters2005quorum,hense2007does}.
Production enters the death rate because making signal is metabolically
costly, which is also what makes the behaviour exploitable by
non-producers~\cite{diggle2007cooperation}. The remaining term in the death
rate is ordinary logistic self-limitation. Hence
\begin{equation}
 \label{eq:rates}
 b(q)=b_0+b_1\frac{q^h}{K_h^h+q^h},
 \qquad
 d(x)=d_0+c+\theta x .
\end{equation}
The sigmoidal response in \eqref{eq:rates} permits bistability for the
dimensionless values of Table~\ref{tab:parameters}, which are used throughout.
They are illustrative, hence the conclusions below are methodological rather
than organism-specific.

\begin{table}[tbp]
\caption{Parameters of the four-channel reaction network; the increments and rates of the channels are those of the table in the introduction. The cost $c$ is varied where stated, with $c=0.36$ as reference, and $r$ is swept over two decades. All quantities are dimensionless.}
\label{tab:parameters}
\centering
\small
\renewcommand{\arraystretch}{1.12}
\setlength{\tabcolsep}{3.5pt}

% tabularx prevents overflow by letting the "meaning" column wrap.
\begin{tabularx}{0.8\textwidth}{@{}l l >{\raggedright\arraybackslash}X r@{}}
\toprule
\textbf{channel $\varrho$} & \textbf{symbol} & \textbf{meaning} & \textbf{value}\\
\midrule
\rowcolor{qsblue!10}
\textcolor{qsblue}{\textbf{division}} & $b_0$ & basal division rate & $0.5$\\
\rowcolor{qsblue!10}
& $b_1$ & maximal signal-induced increment & $2.0$\\
\rowcolor{qsblue!10}
& $K_h$ & half-response signal level & $1.0$\\
\rowcolor{qsblue!10}
& $h$ & Hill exponent (cooperativity) & $4$\\
\addlinespace[2pt]
\rowcolor{qsblue!10}
\textcolor{qsblue}{\textbf{death}} & $d_0$ & basal death rate & $0.8$\\
\rowcolor{qsblue!10}
& $c$ & cost of production & $0.36$\\
\rowcolor{qsblue!10}
& $\theta$ & crowding coefficient & $1.0$\\
\addlinespace[2pt]
\rowcolor{qsorange!12}
\textcolor{qsorange}{\textbf{production}} & $\alpha_\Coop$ & production rate per cell & $2.0$\\
\addlinespace[2pt]
\rowcolor{qsorange!12}
\textcolor{qsorange}{\textbf{removal}} & $\kappa$ & signal removal rate & $1.0$\\
\midrule
\rowcolor{qsgreen!10}
\textcolor{qsgreen}{\textbf{signal channels}} & $r$ & common factor of production and removal & varied\\
\bottomrule
\end{tabularx}
\end{table}

%=====================================================================
\section{Three regimes}
\label{ssec:limits}

The large $N$ behaviour of the process admits three descriptions, distinguished by the
size of the deviation from the deterministic path that each one resolves: a law
of large numbers for the path itself, a central limit theorem for deviations of
order $N^{-1/2}$, and a large-deviation principle for deviations of order one.
All three are known results in the context of reaction networks. We state them for the network of
Figure~\ref{fig:network} and verify their hypotheses, and we cite rather than
reproduce their proofs. The barrier computed in this paper is an object of the
third regime, and \eqref{eq:intro-regimes} of the introduction states all
three side by side.

\subsection{Law of large numbers}

Write $F(U)=\sum_\varrho\Delta_\varrho\beta_\varrho(U)$ for the drift associated
with \eqref{eq:generator}; summing the four increments of
Figure~\ref{fig:network} against their rate densities gives the vector field
of the introduction.

\begin{theorem}[Kurtz~\cite{kurtz1970solutions}; see also
{\cite[Ch.~11]{ethier2009markov}}]
\label{thm:lln}
Fix $r>0$, let $G\subset(0,\infty)^2$ be open and bounded, and suppose
$U^N(0)\to U_0\in G$ in probability. Let $U=(x,w)$ solve $\dot U=F(U)$, i.e.,
\begin{equation}
 \label{eq:mean-field}
 \dot x=x\bigl[b(w)-d(x)\bigr],
 \qquad
 \dot w=r\bigl[\alpha_\Coop x-\kappa w\bigr],
\end{equation}
with $U(0)=U_0$, and let $T>0$ be such that $U(t)\in G$ for every $t\le T$.
Then for every $\varepsilon>0$,
\[
 \lim_{N\to\infty}\mathbb P
 \Bigl(\sup_{t\le T}\lvert U^N(t)-U(t)\rvert>\varepsilon\Bigr)=0 .
\]
\end{theorem}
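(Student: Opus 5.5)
The plan is to verify the hypotheses of Kurtz's theorem for density-dependent Markov chains and then invoke it. There are two obstacles to an immediate application. First, the rates are only locally well behaved. Second, the division intensity $x\,b(w)$ and the crowding term $\theta x^2$ grow superlinearly, so the rates are not globally Lipschitz. We handle both by localizing to a compact neighbourhood of the deterministic trajectory. Concretely, since $U([0,T])\subset G$ is compact and $G$ is open, we choose $\delta>0$ such that the closed $\delta$-neighbourhood $K$ of $U([0,T])$ still lies in $G$. Because $G\subset(0,\infty)^2$ is bounded and $K\subset G$ is compact, $K$ is a compact subset of $(0,\infty)^2$. It therefore stays a positive distance from the boundary where $x=0$ or $w=0$.

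Next we check the Lipschitz hypothesis on $K$. Each rate density $\beta_\varrho$ is smooth on a neighbourhood of $K$: the Hill function $q\mapsto q^h/(K_h^h+q^h)$ is $C^\infty$ for $q\ge0$ when $h=4$, and the remaining rates $x d(x)$, $r\alpha_\Coop x$ and $r\kappa w$ are polynomials. Hence each $\beta_\varrho$ is bounded and Lipschitz on $K$, and so is $F$. We then define modified rates $\tilde\beta_\varrho$ that agree with $\beta_\varrho$ on $K$ and are bounded, nonnegative and globally Lipschitz on $\R^2$. One way to build them is to compose $\beta_\varrho$ with the nearest-point projection onto a convex compact set containing $K$ inside $(0,\infty)^2$. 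Alternatively, we multiply by a smooth cutoff and add nothing outside. For these modified rates the hypotheses of Kurtz's theorem \cite[Ch.~11, Thm.~2.1]{ethier2009markov} hold: finitely many jumps $\Delta_\varrho$, the bound $\sup_U\sum_\varrho|\Delta_\varrho|\tilde\beta_\varrho(U)<\infty$, and a globally Lipschitz drift $\tilde F$.

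We can then apply the theorem. Applied to the modified chain $\tilde U^N$, started at $U^N(0)$, it gives $\sup_{t\le T}|\tilde U^N(t)-U(t)|\to0$ in probability. The solution of $\dot U=\tilde F(U)$ coincides with $U$, since $U$ never leaves $K$. It remains to transfer the result back to the original chain. We couple $U^N$ and $\tilde U^N$ through the random time-change (Poisson) representation, using the same unit Poisson processes $Y_\varrho$. With this coupling the two chains agree up to the exit time $\sigma_N$ from $K$. For $\varepsilon<\delta$, the event $\{\sup_{t\le T}|U^N-U|>\varepsilon\}$ is contained in $\{\sup_{t\le T}|\tilde U^N-U|>\varepsilon\}$. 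The reason is that before $\sigma_N$ the paths coincide, and any departure by more than $\varepsilon$ occurs while the process is still in $K$, hence at a time when the two chains agree. The case of general $\varepsilon$ follows by monotonicity.

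The main step to get right is the initial-condition hypothesis. The cited theorem is usually stated for $U^N(0)\to U_0$ almost surely, or with a deterministic limit. Two routes handle convergence in probability. The first passes to subsequences converging almost surely, using that convergence in probability is equivalent to almost-sure convergence along sub-subsequences. The second uses the Gronwall estimate directly: $\sup_{t\le T}|\tilde U^N-U|\le (|U^N(0)-U_0|+\sup_{t\le T}|M^N(t)|)e^{LT}$, where $M^N$ is the compensated Poisson martingale. Doob's inequality gives $\E\sup_{t\le T}|M^N(t)|^2=O(1/N)$ because the rates are bounded, which completes the argument.
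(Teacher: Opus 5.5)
Your proposal is correct and takes the same route as the paper. The paper checks that the channel family is finite with unit increments and that every rate density is smooth on $[0,\infty)^2$, hence Lipschitz on compact subsets. It then invokes Kurtz's theorem, with $G$ bounded to control the quadratic growth of $xd(x)$.

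Your explicit localization, using projected rates and coupling through common Poisson clocks up to exit from $K$, is the same device the paper uses after Theorem~\ref{thm:ldp}. That step and your handling of initial data converging only in probability are details already covered by the local, in-probability form of Kurtz's theorem cited in the statement, which is why the paper calls the application immediate.

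One small slip: $x\,b(w)$ grows only linearly in $x$, since $b$ is bounded. It fails to be globally Lipschitz because of the factor $x\,b'(w)$, not because of superlinear growth.
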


The application of~\cite{kurtz1970solutions} is immediate here: the family is
finite, the increments $\Delta_\varrho$ have unit length, and, since $h$ is a
positive integer and $K_h^h+w^h$ does not vanish, every rate density
$\beta_\varrho$ of Figure~\ref{fig:network} is smooth on $[0,\infty)^2$ and
therefore Lipschitz on compact subsets of it. Notice that $G$ is required to
be bounded because $xd(x)$ grows quadratically; the restriction is harmless
below, where every trajectory of interest stays in a compact subset of
$(0,\infty)^2$.

\subsubsection{Equilibria and stability}
Within this mean-field system, the formal limit $r\to\infty$ slaves the signal
to
\begin{equation}
 \label{eq:slaved}
 \bar w(x)=\frac{\alpha_\Coop x}{\kappa}.
\end{equation}
The order of this statement matters. At fixed $N$, the fast signal has a
conditional Poisson equilibrium rather than the deterministic value
\eqref{eq:slaved}; deterministic slaving is the fast limit after the
mean-field reduction.
In this subsection, however, $\bar w$ serves only to locate and classify the
equilibria of the full system \eqref{eq:mean-field}, not to reduce it; the
dynamics with the signal eliminated are taken up in
Section~\ref{ssec:eliminated}.
Writing $\bar b(x):=b(\bar w(x))$ for the division rate
along \eqref{eq:slaved} and $\bar F(x):=x[\bar b(x)-d(x)]$ for the slaved drift,
the factor $r$ leaves the deterministic picture untouched.

\begin{proposition}
\label{prop:r-independence}
Let $\theta>0$ and let $U_{\mathrm{eq}}=(x_{\mathrm{eq}},w_{\mathrm{eq}})$ be an
equilibrium of \eqref{eq:mean-field}. Then
$w_{\mathrm{eq}}=\bar w(x_{\mathrm{eq}})$ and $\bar F(x_{\mathrm{eq}})=0$, both
independently of $r$. Moreover, the Jacobian $J_r$ of
\eqref{eq:mean-field} at $U_{\mathrm{eq}}$ satisfies

\begin{enumerate}\renewcommand{\labelenumi}{(\roman{enumi})}
\item If $x_{\mathrm{eq}}>0$, then
\[
 \det J_r=-r\kappa\,\bar F'(x_{\mathrm{eq}}),
 \qquad
 \operatorname{tr}J_r=-x_{\mathrm{eq}}d'(x_{\mathrm{eq}})-r\kappa<0 ,
\]
hence for every $r>0$ the equilibrium is a saddle when
$\bar F'(x_{\mathrm{eq}})>0$ and asymptotically stable when
$\bar F'(x_{\mathrm{eq}})<0$.

\item If $x_{\mathrm{eq}}=0$, then $U_{\mathrm{eq}}=U_{\mathrm{off}}:=(0,0)$ and
$J_r$ has eigenvalues $b_0-d_0-c$ and $-r\kappa$, hence for every $r>0$ the
extinction state is asymptotically stable when $b_0<d_0+c$ and a saddle when
$b_0>d_0+c$.
\end{enumerate}
\end{proposition}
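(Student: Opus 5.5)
The argument is a direct computation from \eqref{eq:mean-field}; the only real care is with signs. First locate the equilibria. Since $r>0$, the second equation gives $\alpha_\Coop x_{\mathrm{eq}}=\kappa w_{\mathrm{eq}}$, that is, $w_{\mathrm{eq}}=\bar w(x_{\mathrm{eq}})$, and this relation does not involve $r$. Substituting into the first equation gives $x_{\mathrm{eq}}[b(\bar w(x_{\mathrm{eq}}))-d(x_{\mathrm{eq}})]=\bar F(x_{\mathrm{eq}})=0$, again free of $r$.

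Next write the Jacobian at an arbitrary point:
\[
 J_r=\begin{pmatrix} b(w)-d(x)-xd'(x) & x\,b'(w)\\ r\alpha_\Coop & -r\kappa\end{pmatrix}.
\]
For (i), take $x_{\mathrm{eq}}>0$. Then $b(w_{\mathrm{eq}})=d(x_{\mathrm{eq}})$, so the $(1,1)$ entry reduces to $-x_{\mathrm{eq}}d'(x_{\mathrm{eq}})=-\theta x_{\mathrm{eq}}<0$. The trace is therefore $-x_{\mathrm{eq}}d'(x_{\mathrm{eq}})-r\kappa<0$, which uses $\theta>0$.

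For the determinant, compute
\[
 \det J_r=r\kappa x_{\mathrm{eq}}d'(x_{\mathrm{eq}})-r\alpha_\Coop x_{\mathrm{eq}}b'(w_{\mathrm{eq}}).
\]
Compare this with $\bar F'(x)=\bar b(x)-d(x)+x[\bar b'(x)-d'(x)]$, where $\bar b'(x)=b'(\bar w(x))\,\alpha_\Coop/\kappa$. At the equilibrium the first two terms cancel, so
\[
 \bar F'(x_{\mathrm{eq}})=x_{\mathrm{eq}}\bigl[\alpha_\Coop b'(w_{\mathrm{eq}})/\kappa-d'(x_{\mathrm{eq}})\bigr],
\]
and multiplying by $-r\kappa$ recovers $\det J_r$. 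This is the identity I expect to need the most care, because the chain-rule factor $\alpha_\Coop/\kappa$ must exactly cancel the $\kappa$.

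The classification follows from the planar criterion. If $\det J_r<0$, the eigenvalues are real with opposite signs, so the equilibrium is a saddle. If $\det J_r>0$ and $\operatorname{tr}J_r<0$, both eigenvalues have negative real part, so the equilibrium is asymptotically stable by linearization. Since $r\kappa>0$, the sign of $\det J_r$ is minus the sign of $\bar F'(x_{\mathrm{eq}})$ for every $r$.

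For (ii), $x_{\mathrm{eq}}=0$ forces $w_{\mathrm{eq}}=\bar w(0)=0$. At this point $b(0)=b_0$ because $h\ge1$, and $d(0)=d_0+c$. The $(1,2)$ entry vanishes because of the factor $x$, so $J_r$ is lower triangular with diagonal entries $b_0-d_0-c$ and $-r\kappa$. The eigenvalues are read off directly, and hyperbolic linearization gives stability when $b_0<d_0+c$ and a saddle when $b_0>d_0+c$.
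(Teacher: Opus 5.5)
Your proposal is correct and follows the paper's proof essentially step for step: you locate the equilibria through the $r$-free relation $w=\bar w(x)$, use $b(w_{\mathrm{eq}})=d(x_{\mathrm{eq}})$ to reduce the $(1,1)$ entry of $J_r$ to $-\theta x_{\mathrm{eq}}$, match $\det J_r$ with $-r\kappa\bar F'(x_{\mathrm{eq}})$ via the chain rule, and read off the lower-triangular Jacobian at the origin. Your version is only slightly more explicit than the paper's, in stating the planar trace--determinant criterion and in noting that $b(0)=b_0$ uses $h\ge 1$.
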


\begin{proof}
The second component of \eqref{eq:mean-field} vanishes if and only if
$w=\bar w(x)$, and the first then reads $\bar F(x)=0$; neither involves $r$.
(i) Using $\bar b(x_{\mathrm{eq}})=d(x_{\mathrm{eq}})$,
\[
 J_r=\begin{pmatrix}
 -x_{\mathrm{eq}}d'(x_{\mathrm{eq}}) & x_{\mathrm{eq}}b'(w_{\mathrm{eq}})\\
 r\alpha_\Coop & -r\kappa
 \end{pmatrix},
 \qquad
 \bar F'(x_{\mathrm{eq}})
 =x_{\mathrm{eq}}\bigl[(\alpha_\Coop/\kappa)b'(w_{\mathrm{eq}})-d'(x_{\mathrm{eq}})\bigr],
\]
whence $\det J_r=-r\kappa\bar F'(x_{\mathrm{eq}})$, and the trace is negative
since $d'\equiv\theta>0$. (ii) At the origin $J_r$ is lower triangular with
diagonal entries $b_0-d_0-c$ and $-r\kappa$.
\end{proof}

For Table~\ref{tab:parameters} at $c=0.36$ the system has the three equilibria of
Figure~\ref{fig:nullclines}: the stable extinction state $U_{\mathrm{off}}$, an unstable
threshold $x^\ast$, and a stable cooperative state $x_{\mathrm{on}}$, with
\[
 x^\ast=0.55881,
 \qquad
 x_{\mathrm{on}}=1.29675,
 \qquad
 \bar w(x_{\mathrm{on}})=2.59350 .
\]
Since $b_0<d_0$ in Table~\ref{tab:parameters}, $U_{\mathrm{off}}$ is
asymptotically stable for every cost $c\ge0$ and every $r>0$.
%%%%%%%% R off %%%%%%%%%
\subsubsection{Basins of attraction and the extinction target}
\label{sec:off-target}

Let $\Phi_t^{(r)}(u)$ denote the deterministic solution starting
from $u$, and write $Q=(0,\infty)^2$. The two attraction basins
in the open quadrant are
\[
 \mathcal B_{\mathrm{on}}(r)
 :=\{u\in Q:\Phi_t^{(r)}(u)\to U_{\mathrm{on}}\},
 \qquad
 \mathcal B_{\mathrm{off}}(r)
 :=\{u\in Q:\Phi_t^{(r)}(u)\to U_{\mathrm{off}}\}.
\]
For Table~\ref{tab:parameters} and each cost $c\in[0.2,0.5]$, which contains
the costs used below, $b_0<d_0+c<b_0+b_1$. The positive zeros of the slaved
drift $\bar F$ are the positive roots of
\[
 P(x):=\bigl(b_0-d_0-c-\theta x\bigr)\bigl(K_h^h+(\alpha_\Coop x/\kappa)^h\bigr)
 +b_1(\alpha_\Coop x/\kappa)^h,
\]
obtained by multiplying $b(\bar w(x))=d(x)$ by the positive factor
$K_h^h+(\alpha_\Coop x/\kappa)^h$. For $h=4$ the polynomial $P$ has degree $5$,
and its nonzero coefficients, in the degrees $0,1,4,5$, have the signs
$-,-,+,-$. By Descartes' rule of signs $P$ has at most two positive roots,
counted with multiplicity. Since $P(0)<0$, $P(1)=9.9-17c>0$ and
$P(x)\to-\infty$ as $x\to\infty$, it has exactly two, $x^\ast<x_{\mathrm{on}}$,
and both are simple. The equilibria in $[0,\infty)^2$ are then exactly
$U_{\mathrm{off}}$, the saddle $U_\ast=(x^\ast,w^\ast)$, where
$w^\ast:=\bar w(x^\ast)$, and $U_{\mathrm{on}}$. Since $\bar F<0$ on
$(0,x^\ast)$ and both zeros are simple, $\bar F'(x^\ast)>0>\bar
F'(x_{\mathrm{on}})$, hence by Proposition~\ref{prop:r-independence}, for
every $r>0$, $U_\ast$ is a hyperbolic saddle and $U_{\mathrm{on}}$,
$U_{\mathrm{off}}$ are asymptotically stable.

The two basins are separated by the stable manifold $W^s(U_\ast)$ of the
saddle, the separatrix, i.e.\ the set of initial states whose solutions
converge to $U_\ast$. Although the equilibria are independent of $r$, the
separatrix generally is not (Figure~\ref{fig:nullclines}). Thus the cell
density alone does not determine which equilibrium a trajectory approaches.
Two pieces of the picture are nevertheless fixed for all $r$. One can show
that, for every $r>0$, the quadrant
$\{(x,w):x>x^\ast,\ w>w^\ast\}$ lies in $\mathcal B_{\mathrm{on}}(r)$ and the
segment $\{x^\ast\}\times(0,w^\ast)$ lies in $\mathcal B_{\mathrm{off}}(r)$;
we omit the proof. In particular, the threshold line $\{x=x^\ast\}$ meets the
separatrix only at $U_\ast$, and a crossing of that line with $w>w^\ast$ lies
in the cooperative basin.

\begin{figure}[tbp]
\centering
\includegraphics[width=0.6\textwidth]{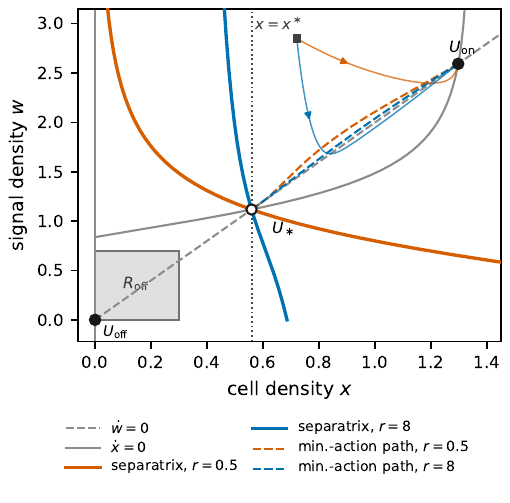}
\caption{Phase portrait of \eqref{eq:mean-field} at $c=0.36$. Grey: the nullclines $\dot w=0$ (dashed) and $\dot x=0$ (solid), which do not depend on $r$; filled points are the stable states and the open point is the saddle $U_\ast$. Thick curves: the separatrix at $r=0.5$ and $r=8$, with $\mathcal B_{\mathrm{off}}(r)$ below and to the left of it. Dotted: the threshold line $x=x^\ast$. Shaded: the target $R_{\mathrm{off}}=[0,0.3]\times[0,0.7]$. Dashed coloured curves: minimum-action paths from $U_{\mathrm{on}}$ to $U_\ast$. Thin curves: relaxation paths from the square marker, in the same colours, which reach the same state along routes that depend on $r$. All quantities are dimensionless.}
\label{fig:nullclines}
\end{figure}

To measure a transition towards extinction, we record arrival
in a specified neighbourhood of $U_{\mathrm{off}}$ within its
attraction basin. This places the stopping target beyond the
separatrix, rather than at the first crossing of it.
Specifically, choose
\begin{equation}
 \label{eq:def-R-off}
 R_{\mathrm{off}}:=[0,a_{\mathrm{off}}]\times[0,b_{\mathrm{off}}],\qquad
 0<a_{\mathrm{off}}<x^\ast,\qquad
 b(b_{\mathrm{off}})<d_0+c,\qquad
 \alpha_\Coop a_{\mathrm{off}}<\kappa b_{\mathrm{off}}.
\end{equation}
Such $a_{\mathrm{off}}$, $b_{\mathrm{off}}$ exist whenever
$b_0<d_0+c$, by continuity of $b$.

\begin{proposition}
\label{prop:off-target}
Let $a_{\mathrm{off}},b_{\mathrm{off}}>0$ satisfy the last two
conditions in \eqref{eq:def-R-off}. Then, for every $r>0$,
$R_{\mathrm{off}}$ is forward invariant under \eqref{eq:mean-field} and
every solution started in $R_{\mathrm{off}}$ converges to
$U_{\mathrm{off}}$. In particular
$R_{\mathrm{off}}\cap Q\subset\mathcal B_{\mathrm{off}}(r)$.
\end{proposition}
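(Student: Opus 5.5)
We will prove forward invariance by checking that the vector field of \eqref{eq:mean-field} points inward, or is tangent, on each edge of the rectangle $R_{\mathrm{off}}=[0,a_{\mathrm{off}}]\times[0,b_{\mathrm{off}}]$. Convergence to $U_{\mathrm{off}}$ will then follow from a monotonicity argument in $x$ followed by one in $w$.

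\textbf{Invariance.} The edges $x=0$ and $w=0$ are harmless.
\begin{itemize}
\item On $x=0$ we have $\dot x=0$, so the axis is invariant.
\item On $w=0$ we have $\dot w=r\alpha_\Coop x\ge0$.
\end{itemize}
The other two edges use the two hypotheses of \eqref{eq:def-R-off}.
\begin{itemize}
\item On the top edge $w=b_{\mathrm{off}}$, $0\le x\le a_{\mathrm{off}}$, we get
\[
\dot w=r(\alpha_\Coop x-\kappa b_{\mathrm{off}})\le r(\alpha_\Coop a_{\mathrm{off}}-\kappa b_{\mathrm{off}})<0 .
\]
\item On the right edge $x=a_{\mathrm{off}}$, $0\le w\le b_{\mathrm{off}}$, monotonicity of $b$ (since $b'\ge0$ for $h=4$) gives $b(w)\le b(b_{\mathrm{off}})<d_0+c\le d(a_{\mathrm{off}})$. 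Hence $\dot x=a_{\mathrm{off}}[b(w)-d(a_{\mathrm{off}})]<0$.
\end{itemize}
The field is Lipschitz and strictly inward on the two relevant edges, and the remaining edges are invariant or inward. The standard tangency (Nagumo) criterion, or a direct first-exit-time contradiction argument, then gives forward invariance for every $r>0$. In the contradiction argument, a solution leaving at a first time $t_0$ must sit on an edge where the relevant coordinate is strictly decreasing, which is impossible.

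\textbf{Convergence.} Inside $R_{\mathrm{off}}$ we have $w\le b_{\mathrm{off}}$ for all $t\ge0$. It follows that
\[
b(w)-d(x)\le b(b_{\mathrm{off}})-(d_0+c)=:-\delta<0 .
\]
Hence $\dot x\le-\delta x$, and so $x(t)\le x(0)e^{-\delta t}\to0$. The $w$-equation is linear with forcing $r\alpha_\Coop x(t)$, which decays exponentially. Variation of constants,
\[
w(t)=e^{-r\kappa t}w(0)+r\alpha_\Coop\int_0^t e^{-r\kappa(t-s)}x(s)\,ds ,
\]
then yields $w(t)\to0$. So every solution started in $R_{\mathrm{off}}$ converges to $U_{\mathrm{off}}$. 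Restricting to initial data in $Q$, where solutions stay in $Q$ because the axes are invariant, gives $R_{\mathrm{off}}\cap Q\subset\mathcal B_{\mathrm{off}}(r)$.

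The only delicate point is the boundary argument at the corners and edges where the field is merely tangent. These are $x=0$, and the corner $(a_{\mathrm{off}},0)$, where $\dot w>0$ but $\dot x<0$. A slightly shrunken or enlarged rectangle avoids any fuss: strict inwardness on the top and right edges is robust to small perturbations. Alternatively, observe that the closed set is bounded by level sets of the two functions $x$ and $w$, each of which is nonincreasing whenever its edge is reached. Neither the condition $a_{\mathrm{off}}<x^\ast$ nor the value of $r$ is needed.
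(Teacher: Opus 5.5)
Your proof is correct and follows the paper's argument step for step. You check that the field points inward on the edges $x=a_{\mathrm{off}}$ and $w=b_{\mathrm{off}}$ using the two conditions in \eqref{eq:def-R-off}, and note that the axis $x=0$ is invariant and the field points inward on $w=0$. You then use $\dot x\le-\gamma x$ together with variation of constants for $w$, which the paper makes explicit through the bound $r\alpha_\Coop x(0)\,t\,e^{-mt}$ with $m=\min\{r\kappa,\gamma\}$. One small correction: the corner $(a_{\mathrm{off}},0)$ is not a tangency point, since there $\dot x<0$ and $\dot w>0$ both point strictly into the rectangle, so no shrinking or enlarging of the rectangle is needed.
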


\begin{proof}
Let $\gamma:=d_0+c-b(b_{\mathrm{off}})>0$. On $R_{\mathrm{off}}$,
$\dot x=x[b(w)-d(x)]\le-\gamma x$, and on the edge $w=b_{\mathrm{off}}$,
$\dot w=r(\alpha_\Coop x-\kappa b_{\mathrm{off}})
\le r(\alpha_\Coop a_{\mathrm{off}}-\kappa b_{\mathrm{off}})<0$. Hence the
flow points into $R_{\mathrm{off}}$ on the edges $x=a_{\mathrm{off}}$ and
$w=b_{\mathrm{off}}$. Moreover, the axis $\{x=0\}$ is invariant, and on the
edge $\{w=0,\ x>0\}$ we have $\dot w=r\alpha_\Coop x>0$, i.e. the flow points
inward there as well, so $R_{\mathrm{off}}$ is forward invariant. Inside it $x(t)\le x(0)e^{-\gamma t}$, and the
variation-of-constants formula for $w$ gives, with $m:=\min\{r\kappa,\gamma\}$,
\[
 w(t)=e^{-r\kappa t}w(0)
 +r\alpha_\Coop\int_0^t e^{-r\kappa(t-s)}x(s)\,ds
 \le e^{-r\kappa t}w(0)+r\alpha_\Coop x(0)\,t\,e^{-mt}\to0 .
\]
\end{proof}

The first condition in \eqref{eq:def-R-off} keeps the target at a
positive distance from the saddle.

Starting near $U_{\mathrm{on}}$, the extinction time
\eqref{eq:collapse-time-off} is the first entry of $U^N$ into this set,
with the target fixed independently of $N$ and $r$.
Its deterministic invariance does not make it stochastically
absorbing: recovery remains possible after entry.
This event differs from both the population-threshold crossing
$X^N\le x^\ast$ and exact cell extinction, $X^N=0$.
The relation between the action required to reach this target
and the saddle action is established in
Section~\ref{sec:barrier}.

%%%%%%%%%%%%%%%%%%%%%%%%%%%%%%%%%%

\FloatBarrier
\subsection{Gaussian fluctuations}

We are now interested in the fluctuations of $U^N$ about the deterministic
solution $U$ of \eqref{eq:mean-field}. Theorem~\ref{thm:lln} makes their
difference vanish, hence we magnify it and define the fluctuation field
\begin{equation}
 \label{eq:fluct-field}
 Z^N:=\sqrt N\,\bigl(U^N-U\bigr),
\end{equation}
$\sqrt N$ being the scale at which it has a nondegenerate limit. Deviations of
that order about the deterministic path are Gaussian, provided the initial
fluctuation is.

\begin{theorem}[Kurtz; see {\cite[Ch.~11]{ethier2009markov}}]
\label{thm:clt}
Assume the hypotheses of Theorem~\ref{thm:lln} and let
$Z^N(0)\Rightarrow Z_0$ for a random vector $Z_0$ that is Gaussian or
deterministic. Then, on
the time interval of Theorem~\ref{thm:lln}, the field \eqref{eq:fluct-field}
converges weakly in the Skorokhod space $D([0,T];\R^2)$,
$Z^N\Rightarrow Z=(Z_x,Z_w)$, where $Z$ is the Gaussian process with
$Z(0)=Z_0$ solving
\begin{equation}
 \label{eq:clt-system}
 \begin{aligned}
  dZ_x&=\bigl[\bigl(b(w)-d(x)-x\,d'(x)\bigr)Z_x+x\,b'(w)\,Z_w\bigr]dt
       +\sqrt{x\,b(w)}\;dW_1-\sqrt{x\,d(x)}\;dW_2,\\
  dZ_w&=r\bigl[\alpha_\Coop Z_x-\kappa Z_w\bigr]dt
       +\sqrt{r\alpha_\Coop x}\;dW_3-\sqrt{r\kappa w}\;dW_4,
 \end{aligned}
\end{equation}
where $(x,w)=U(t)$ is the deterministic solution of \eqref{eq:mean-field} and
$W_1,\dots,W_4$ are independent standard Brownian motions, one per channel of
Figure~\ref{fig:network}, independent of $Z_0$.
\end{theorem}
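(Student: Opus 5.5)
The statement is Kurtz's functional CLT for density-dependent families, applied to the network of Figure~\ref{fig:network}. The plan is to verify the hypotheses of the general theorem in \cite[Ch.~11, Thm.~2.3]{ethier2009markov} and then identify the limit in the form \eqref{eq:clt-system}.

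\textbf{Step 1: random time change and localization.} Write the process as
\[
 U^N(t)=U^N(0)+\sum_\varrho \frac{\Delta_\varrho}{N}\,Y_\varrho\Bigl(N\int_0^t\beta_\varrho(U^N(s))\,ds\Bigr),
\]
with independent unit Poisson processes $Y_\varrho$. The rates are smooth on $[0,\infty)^2$ but $xd(x)$ grows quadratically. I therefore localize: fix a compact $K\subset G$ containing a neighbourhood of $\{U(t):t\le T\}$. I then replace each $\beta_\varrho$ by a $C^1$, bounded, globally Lipschitz $\tilde\beta_\varrho$ that agrees with $\beta_\varrho$ on $K$. For the modified process the general theorem applies directly, since it requires only finitely many channels, bounded increments, and $F$ being $C^1$ with bounded derivative. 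By Theorem~\ref{thm:lln}, the modified and original processes agree on $[0,T]$ with probability tending to one, so the two $Z^N$ agree with the same probability and share weak limits.

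\textbf{Step 2: decomposition of the fluctuation field.} Centre the Poisson processes, $\tilde Y_\varrho(u)=Y_\varrho(u)-u$, and set $M^N_\varrho(t)=N^{-1/2}\tilde Y_\varrho(N\int_0^t\beta_\varrho(U^N)ds)$. Then
\[
 Z^N(t)=Z^N(0)+\sum_\varrho\Delta_\varrho M^N_\varrho(t)+\int_0^t\sqrt N\bigl(F(U^N)-F(U)\bigr)ds .
\]
For the noise term, use Donsker's theorem for $N^{-1/2}\tilde Y_\varrho(N\cdot)$ together with the LLN inside the time change. These give $M^N_\varrho\Rightarrow W_\varrho\bigl(\int_0^\cdot\beta_\varrho(U(s))ds\bigr)$, jointly over the four independent channels. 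The limits are independent of $Z_0$, because the $Y_\varrho$ are independent of the initial condition.

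For the drift term, a first-order Taylor expansion gives $\sqrt N(F(U^N)-F(U))=\nabla F(U)Z^N+o(1)\,|Z^N|$. The error vanishes uniformly on $K$, since $\nabla F$ is uniformly continuous there.

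\textbf{Step 3: passage to the limit.} The linear Volterra map $(z_0,m)\mapsto z$ solving $z=z_0+m+\int\nabla F(U)z\,ds$ is continuous on $D([0,T];\R^2)$, with the limit concentrated on continuous paths. Combining this with Gronwall to control the Taylor remainder, the continuous-mapping theorem gives $Z^N\Rightarrow Z$. Here $Z$ solves $dZ=\nabla F(U)Z\,dt+\sum_\varrho\Delta_\varrho\sqrt{\beta_\varrho(U)}\,dW_\varrho$, using that a time-changed Brownian motion $W(\int_0^\cdot g)$ equals $\int\sqrt g\,dW$ in law.

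\textbf{Step 4: identification with \eqref{eq:clt-system}.} Substitute the four channels:
\begin{itemize}
\item division: $\Delta=(1,0)$, $\beta=xb(w)$;
\item death: $\Delta=(-1,0)$, $\beta=xd(x)$, with its sign absorbed into $-dW_2$;
\item production: $\Delta=(0,1)$, $\beta=r\alpha_\Coop x$;
\item removal: $\Delta=(0,-1)$, $\beta=r\kappa w$.
\end{itemize}
The Jacobian of \eqref{eq:mean-field} along $U$ has entries $b-d-xd'$, $xb'$, $r\alpha_\Coop$ and $-r\kappa$. Together these reproduce \eqref{eq:clt-system}.

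The main obstacle is Step 1. The localization must be set up so that the limiting square roots are well defined and $F$ is $C^1$ along the path. This is exactly where $G\subset(0,\infty)^2$ is used: with $x,w>0$ along the trajectory, the variances $xb,xd,r\alpha_\Coop x,r\kappa w$ are bounded on $K$. The square roots only need to make sense in the limit, which is guaranteed there.
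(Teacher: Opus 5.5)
Your proposal is correct and takes the same route as the paper. The paper invokes Kurtz's theorem from Ethier--Kurtz, Ch.~11, after noting that the rate densities are smooth on $[0,\infty)^2$ and the increments have unit length, and then reads off the drift Jacobian and the four noise terms exactly as in your Step~4; your Steps~2--3 unpack the cited proof, and your localization makes explicit what the paper handles through the boundedness of $G$. One correction to your closing remark: the square roots $\sqrt{xb(w)},\dots,\sqrt{r\kappa w}$ are defined on the whole closed quadrant, so the localization is needed because $xd(x)$ grows quadratically, not for positivity, and you should add that $Z$ is Gaussian because $Z_0$ is Gaussian or deterministic and independent of the $W_\varrho$.
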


The cited theorem asks one more derivative of the rate densities and a moment
condition of the jumps, both available since every $\beta_\varrho$ is smooth on
$[0,\infty)^2$ and the increments have unit length. Collecting the four noise
terms gives the diffusion matrix
$D=\sum_\varrho\Delta_\varrho\Delta_\varrho^{\!\top}\beta_\varrho$, in which,
unlike the drift, the increments enter quadratically. The channels of
Figure~\ref{fig:network} are axis-aligned, hence
\begin{equation}
 \label{eq:diffusion}
 D(x,w)=\operatorname{diag}
 \bigl(xb(w)+xd(x),\;r[\alpha_\Coop x+\kappa w]\bigr).
\end{equation}
At an asymptotically stable equilibrium the stationary covariance $\Sigma$ of
$Z$ solves the Lyapunov equation $J_r\Sigma+\Sigma J_r^{\!\top}+D=0$. At
$U_{\mathrm{on}}$ the variance of the signal conditional on the cells is
$1.88\,\bar w(x_{\mathrm{on}})$ at $r=0.5$, $2.01\,\bar w(x_{\mathrm{on}})$ at
$r=1$ and $1.02\,\bar w(x_{\mathrm{on}})$ at $r=256$, i.e. this regime recovers
the conditional Poisson equilibrium at large $r$ and departs from it by a factor
near two for $r\le2$.

\subsection{Large deviations}

By means of the exponential change of variable applied to \eqref{eq:generator},
i.e. of the nonlinear generator $N^{-1}e^{-Nf}\mathfrak L_N(e^{Nf})$ of the
Feng-Kurtz construction~\cite{feng2006large}, we obtain in the limit the
Hamiltonian
\[
 \mathbb H(U,p)
 =\sum_{\varrho}\beta_\varrho(U)
  \bigl(e^{\inner{p}{\Delta_\varrho}}-1\bigr),
\]
which for the four channels of Figure~\ref{fig:network} and momenta $(p,q)$
reads
\begin{equation}
 \label{eq:ham}
 \mathbb H(x,w,p,q)
 =\underbrace{xb(w)(e^p-1)+xd(x)(e^{-p}-1)}_{\mathbb H_s}
 +r\underbrace{\bigl[\alpha_\Coop x(e^q-1)+\kappa w(e^{-q}-1)\bigr]}_{\mathbb H_f}.
\end{equation}
Its Legendre-Fenchel transform
$\mathbb L(U,v)=\sup_p[\inner{p}{v}-\mathbb H(U,p)]$ is the local rate, and the
action of an absolutely continuous path is
$\mathcal S_T(U)=\int_0^T\mathbb L(U(t),\dot U(t))\,dt$. Deviations of order one
are exponentially unlikely at speed $N$.

\begin{theorem}[Agazzi et al.~{\cite[Thm.~1 and Rem.~2.6]{agazzi2022large}}; see also~\cite{shwartz1995large}]
\label{thm:ldp}
Let $G$ be an open set whose closure is a compact subset of $Q=(0,\infty)^2$,
and let the initial states be deterministic, $U^N(0)=u_N\to u_0\in G$. Let
$\widetilde U^N$ be the process with the jumps of Figure~\ref{fig:network} and
rate densities $\widetilde\beta_\varrho$ that coincide with $\beta_\varrho$ on
$\overline G$ and are Lipschitz, bounded and bounded away from zero on $\R^2$,
started at $\widetilde U^N(0)=u_N$.
Then $\widetilde U^N$ satisfies a large-deviation principle on $[0,T]$ at speed $N$
with the good rate function
\[
 I_{u_0}(U)=
 \begin{cases}
  \widetilde{\mathcal S}_T(U), & U\in\mathrm{AC}([0,T];\R^2),\ U(0)=u_0,\\
  +\infty, & \text{otherwise},
 \end{cases}
\]
where $\widetilde{\mathcal S}_T$ is the action built from $\widetilde\beta_\varrho$.
That is, the sublevel sets of $I_{u_0}$ are compact, and for every closed set
$C$ and every open set $O$ of the Skorokhod space $D([0,T];\R^2)$ with the $J_1$ topology,
\[
 \begin{aligned}
  \limsup_{N\to\infty}\frac1N\log\mathbb P\bigl(\widetilde U^N\in C\bigr)
   &\le-\inf_{U\in C}I_{u_0}(U),\\
  \liminf_{N\to\infty}\frac1N\log\mathbb P\bigl(\widetilde U^N\in O\bigr)
   &\ge-\inf_{U\in O}I_{u_0}(U).
 \end{aligned}
\]
\end{theorem}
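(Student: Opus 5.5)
The plan is to obtain Theorem~\ref{thm:ldp} by checking that the modified process $\widetilde U^N$ falls within the hypotheses of~\cite[Thm.~1]{agazzi2022large}, rather than by proving a sample-path LDP from scratch.

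\emph{Construction of the modified rates.} Since $\overline G$ is a compact subset of $Q=(0,\infty)^2$, there is $\delta>0$ with $x\ge\delta$ and $w\ge\delta$ on $\overline G$. On $\overline G$ the four rate densities $xb(w)$, $xd(x)$, $r\alpha_\Coop x$, $r\kappa w$ are smooth and bounded below by $\delta b_0$, $\delta(d_0+c)$, $r\alpha_\Coop\delta$, $r\kappa\delta$. They are therefore bounded above and below by positive constants there. I will extend each $\beta_\varrho|_{\overline G}$ to $\R^2$ as follows. First take a Lipschitz extension (McShane). Then clip it between the two positive constants. Clipping preserves the Lipschitz property and leaves the values on $\overline G$ unchanged. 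This shows that admissible $\widetilde\beta_\varrho$ exist.

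\emph{Verification of hypotheses.} The cited theorem covers jump processes with finitely many fixed increments $\Delta_\varrho/N$ and rates $N\widetilde\beta_\varrho$, where the $\widetilde\beta_\varrho$ are Lipschitz, bounded and bounded away from zero, and the initial data are deterministic and convergent. These are exactly the stated assumptions. Under them the local Hamiltonian
\[
\widetilde{\mathbb H}(U,p)=\sum_\varrho\widetilde\beta_\varrho(U)\bigl(e^{\inner{p}{\Delta_\varrho}}-1\bigr)
\]
is finite, convex and superlinear in $p$, uniformly in $U$. Hence $\widetilde{\mathbb L}$ is the Legendre transform, and by Remark~2.6 of~\cite{agazzi2022large} the rate function is the action on absolutely continuous paths with $U(0)=u_0$, and $+\infty$ otherwise. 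Goodness of the rate function also comes from the cited result. One can see it directly: uniform superlinearity of $\widetilde{\mathbb L}(U,\cdot)$ gives equi-integrability of $\dot U$ on sublevel sets, and Arzel\`a--Ascoli then gives compactness in $C([0,T])\subset D([0,T])$.

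\emph{Main obstacle.} The main difficulty is the positivity of the rates. The original rates vanish on the axes, and $xd(x)$ grows without bound. This is why the statement is restricted to $G$ with compact closure in $Q$ and to modified rates. The step I expect to need the most care is the spanning or irreducibility condition used in~\cite{agazzi2022large}. The increments $\pm e_1,\pm e_2$ positively span $\R^2$, so the Hamiltonian is coercive in every direction and the lower bound holds. Once this is checked, the upper bound for closed sets and the lower bound for open sets follow directly from the citation. The identification $\widetilde{\mathcal S}_T=\mathcal S_T$ for paths staying in $G$ holds because $\widetilde\beta_\varrho=\beta_\varrho$ on $\overline G$.
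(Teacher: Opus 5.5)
Your proposal is correct and takes the same approach as the paper. The paper also obtains the theorem only by citing Agazzi et al.\ and noting that admissible rates $\widetilde\beta_\varrho$ exist and agree with $\beta_\varrho$ on $\overline G$, so that $\widetilde{\mathcal S}_T=\mathcal S_T$ for paths in $G$; the one difference is that the paper builds the extension by composing $\beta_\varrho$ with the projection onto a compact rectangle in $Q$ containing $\overline G$, whereas your McShane extension followed by clipping is an equally valid construction.
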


Such extensions exist, e.g. the rate densities composed with the projection onto
a compact rectangle in $Q$ containing $\overline G$. The processes can be coupled
up to their first exit from $G$. Consequently, the displayed bounds also hold for
$U^N$ for open or closed path events all of whose paths remain in $G$, with
$\widetilde{\mathcal S}_T$ replaced by $\mathcal S_T$. This is the only
localization used below. Results of this type go back
to~\cite{leonard1995large,shwartz1995large}, and the flux formulation is given
in~\cite{patterson2019large}. The localization is needed because the rate
densities vanish on the axes. The cited theorem also covers such degenerate
sets, under an escape condition on the rates near them that we do not verify
for this network. The variational results of Section~\ref{sec:barrier} restrict
every admissible path to the open quadrant. The local formulation above does
not by itself give an exit-time theorem for the original chain, nor a
comparison with paths that touch the axes.

\begin{remark}
\label{rem:H-contains}
The third regime contains the other two, in the sense that they are the first
two terms of $\mathbb H$ at zero momentum. Differentiating
$\mathbb H(U,p)=\sum_\varrho\beta_\varrho(U)(e^{\inner{p}{\Delta_\varrho}}-1)$
once and twice at $p=0$ gives $\sum_\varrho\beta_\varrho\Delta_\varrho=F(U)$
and $\sum_\varrho\beta_\varrho\Delta_\varrho\Delta_\varrho^{\!\top}=D(U)$,
i.e. the drift of \eqref{eq:mean-field} and the diffusion matrix
\eqref{eq:diffusion}, hence
\[
 \begin{aligned}
  \mathbb H(U,p)&=\inner{F(U)}{p}+\tfrac12\inner{p}{D(U)p}
   +O(\lvert p\rvert^3),\\
  \mathbb L(U,v)&=\tfrac12\inner{v-F(U)}{D(U)^{-1}\bigl(v-F(U)\bigr)}
   +O(\lvert v-F(U)\rvert^3).
 \end{aligned}
\]
The cost of a velocity is therefore measured, to leading order, in the norm
that the mobility of Theorem~\ref{thm:clt} supplies, and to that order the
action is the one of the diffusion driven by the noise of
\eqref{eq:clt-system}. Notice that this is also why the calibration of
Section~\ref{sec:mam} works: near $U_{\mathrm{on}}$ the quasipotential is
governed by the quadratic form above, whose metric is $D^{-1}$. What the
barrier of this paper measures is the departure from that quadratic form,
which begins at third order and is where the increments enter exponentially
rather than averaged as in $F$ or quadratically as in $D$. Two networks with
the same drift and different channels therefore share
Theorem~\ref{thm:lln} and can differ in both later regimes. The drift carries
net rates and the diffusion gross ones: at $U_{\mathrm{on}}$ the cell drift
vanishes because $b=d$, whereas the cell diffusion is $2x\,b(w)$. The factor
$r$ is a case in point. By Proposition~\ref{prop:r-independence} it leaves the
equilibria and their stability types unchanged, but it survives in $J_r$ and in
$D_{ww}$, hence in the stationary covariance $\Sigma$, and in the barrier.
Since the rate densities vanish simultaneously only at the origin,
$U_{\mathrm{off}}$ is the only absorbing state of $U^N$, and the cooperative
state is metastable for the jump process: its location is common to every $r$,
its spread $\Sigma/N$ depends on $r$, and, subject to the exit estimates
discussed after \eqref{eq:lifetime-prediction}, its lifetime is of order
$\exp\{N\Delta V(r)\}$.
\end{remark}

%=====================================================================
\section{The extinction barrier}
\label{sec:barrier}

The extinction time \eqref{eq:collapse-time-off} is the first entry into the target
$R_{\mathrm{off}}$ of Section~\ref{sec:off-target}. This section relates the
least action needed to reach that target to the least action needed to reach
the saddle $U_\ast$, which is the quantity we compute.
Theorem~\ref{thm:ldp} assigns the action $\mathcal S_T(U)$ to an absolutely
continuous path $U$ with values in a compact subset of $Q$. On the exponential
scale, a small neighbourhood of such a path has probability of order
$\exp\{-N\mathcal S_T(U)\}$, so the likeliest ways of reaching a target are
paths of nearly minimal action. Accordingly, we restrict every admissible path
to the open quadrant and define, for $u,z\in Q$,
\[
 V_r^Q(u,z)
 :=\inf_{T>0}\;
  \inf_{\substack{U\in\mathrm{AC}([0,T];Q):\\
      U(0)=u,\;U(T)=z}}
  \mathcal S_T(U),
 \qquad
 V_r^Q(z):=V_r^Q(U_{\mathrm{on}},z).
\]
The superscript records the restriction to $Q$. The quasipotential barrier is
the value at the saddle
\begin{equation}
 \label{eq:target}
 \Delta V(r)
 :=V_r^Q(U_\ast)
 =\inf_{T>0}\;
  \inf_{\substack{U\in\mathrm{AC}([0,T];Q):\\
      U(0)=U_{\mathrm{on}},\;U(T)=U_\ast}}
  \mathcal S_T(U).
\end{equation}
Here the first infimum allows the path to choose its duration. This is needed
because the minimizing orbit approaches the two fixed points only
asymptotically. Hence no finite value of $T$ realizes the free-time minimum;
instead, the minimum is approached as $T\to\infty$. This is the reason for the
horizon study in Section~\ref{sec:mam}. The stored minimizers at $c=0.36$,
$r\in\{0.5,1,4,16\}$, stay in $[x^\ast,\infty)^2$, well inside $Q$.

Equation~\eqref{eq:target} is a genuine minimization problem. The local rate
is nonnegative and vanishes along the drift, i.e.\ $\mathbb L(U,F(U))=0$,
since $\mathbb H(U,0)=0$, $p\mapsto\mathbb H(U,p)$ is convex and
$\nabla_p\mathbb H(U,0)=F(U)$ (Remark~\ref{rem:H-contains}); hence following
the deterministic drift costs nothing. Leaving the basin of the
cooperative equilibrium requires a fluctuation; after entering the off basin,
the deterministic evolution can carry the system towards the off state at zero
action. Moreover, at finite $r$ the state is two-dimensional: the signal need
not remain at $w=\bar w(x)$. The cheapest path can exploit signal lag, as
illustrated in Figure~\ref{fig:mechanism}. Since the network is not
reversible, there is also no general reason for this path to be the
deterministic relaxation path run backwards.

The saddle enters as in Freidlin-Wentzell theory~\cite{freidlin2012random}. A
path from $U_{\mathrm{on}}$ into the off basin must cross the separatrix, and
from any point of the separatrix the deterministic flow leads to $U_\ast$ at
zero action, so no point of the separatrix is cheaper to reach than $U_\ast$.
Conversely, from $U_\ast$ the branch of the unstable manifold that leaves
towards smaller $x$ converges to $U_{\mathrm{off}}$, hence enters
$R_{\mathrm{off}}$, at zero action.

\begin{proposition}
\label{prop:saddle-off}
Let $c\in[0.2,0.5]$ and $r>0$, and let $R\subset[0,\infty)^2$ be a relative
neighbourhood of $U_{\mathrm{off}}$ with
$R\cap Q\subset\mathcal B_{\mathrm{off}}(r)$. Then
$\inf_{z\in R\cap Q}V_r^Q(z)=\Delta V(r)$. In particular, by
Proposition~\ref{prop:off-target},
\begin{equation}
 \label{eq:saddle-off}
 \inf_{z\in R_{\mathrm{off}}\cap Q}V_r^Q(z)=\Delta V(r).
\end{equation}
\end{proposition}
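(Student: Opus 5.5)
We prove the two inequalities separately. The tools are the facts that following the drift costs zero action and that actions add under concatenation.

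For the upper bound $\inf_{z\in R\cap Q}V_r^Q(z)\le\Delta V(r)$, fix $\varepsilon>0$ and choose a path in $Q$ from $U_{\mathrm{on}}$ to $U_\ast$ with action at most $\Delta V(r)+\varepsilon$. Since $U_\ast$ is a hyperbolic saddle for every $r>0$ (Proposition~\ref{prop:r-independence} together with Section~\ref{sec:off-target}), it has a one-dimensional unstable manifold. The branch leaving towards smaller $x$ starts in $\mathcal B_{\mathrm{off}}(r)$, because the segment $\{x^\ast\}\times(0,w^\ast)$ lies in that basin. Concretely, the unstable eigenvector has $x$- and $w$-components of the same sign (from the form of $J_r$ with $x^\ast b'(w^\ast)>0$), so this branch enters the region below and to the left of the saddle and converges to $U_{\mathrm{off}}$. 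Hence it enters the neighbourhood $R$ in finite time. Since $\{x=0\}$ and $\{w=0,x>0\}$ are not reached in finite time, the branch stays in $Q$.

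To avoid the infinite time needed to leave $U_\ast$, we use the usual concatenation. Fix a point $u_\delta$ on the branch at distance $\delta$ from $U_\ast$. Join $U_\ast$ to $u_\delta$ by a straight segment traversed in unit time. Its action is $o(1)$ as $\delta\to0$, since $\mathbb L$ is continuous near $(U_\ast,0)$, with $\mathbb L(U_\ast,0)=0$ because $F(U_\ast)=0$, and the rates are bounded away from zero there. Then follow the flow from $u_\delta$ at zero cost until it hits $R\cap Q$. Letting $\varepsilon,\delta\to0$ gives the upper bound.

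For the lower bound, take any path $U\in\mathrm{AC}([0,T];Q)$ from $U_{\mathrm{on}}$ to some $z\in R\cap Q\subset\mathcal B_{\mathrm{off}}(r)$. Since $U(0)\in\mathcal B_{\mathrm{on}}(r)$ and the open quadrant is the disjoint union of the two basins and the separatrix $W^s(U_\ast)$, continuity forces a first time $t_0$ with $U(t_0)\in W^s(U_\ast)$. This requires the basins to be open and to exhaust $Q\setminus W^s(U_\ast)$. In the present setting, where the only equilibria are $U_{\mathrm{off}},U_\ast,U_{\mathrm{on}}$ and trajectories are bounded because of crowding, I would justify this by Poincaré--Bendixson: the negative trace of $J_r$ everywhere in $Q$ suggests a Bendixson--Dulac argument with weight $1/x$. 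Since $\partial_x(b-d)+\partial_w(r(\alpha_\Coop x-\kappa w)/x)=-\theta-r\kappa/x<0$, this rules out periodic orbits and homoclinic loops, so every orbit converges to an equilibrium.

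Now truncate $U$ at $t_0$ and append the flow from $U(t_0)$ to $U_\ast$ at zero cost. This gives a path of action at most $\mathcal S_T(U)$, but of infinite duration. To stay within the definition of $V^Q_r$, stop the flow near $U_\ast$ and finish with a short segment of $o(1)$ action as above. Hence $\Delta V(r)\le\mathcal S_T(U)+o(1)$, and taking infima yields the lower bound. The particular case \eqref{eq:saddle-off} then follows from Proposition~\ref{prop:off-target}.

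The main obstacle is the topological step: showing that $Q$ splits exactly into the two open basins and the separatrix, with no other limit sets. This is where I would invest the Dulac argument and the boundedness of forward orbits. A secondary point is the local continuity estimate for $\mathbb L$ near $U_\ast$, used to connect with short segments.
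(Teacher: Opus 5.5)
Your overall route is the paper's. For the upper bound you take a near-optimal path to $U_\ast$, step onto the left branch of the unstable manifold, and follow the zero-cost flow into $R$; your time-one segment with $\mathbb L(U_\ast,0)=0$ does the job of the paper's bound $V(u,z)\le C_K\lvert u-z\rvert$. For the lower bound, the first exit from $\mathcal B_{\mathrm{on}}(r)$ lands on $W^s(U_\ast)$ by the Dulac weight $1/x$ and Poincar\'e--Bendixson, and the flow carries that point to $U_\ast$ at zero cost. However, the upper bound depends on one step that you assert rather than prove: that the left unstable branch converges to $U_{\mathrm{off}}$.

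The reason you give, that the segment $\{x^\ast\}\times(0,w^\ast)$ lies in $\mathcal B_{\mathrm{off}}(r)$, does not establish it. The paper states that fact without proof. Even granting it, a curve leaving $U_\ast$ into the lower-left quadrant does not lie on that segment. Nothing you wrote stops the branch from leaving that quadrant or returning to $U_\ast$. The same-sign eigenvector only tells you where the branch starts.

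What is missing is a trapping region. The open rectangle $\{0<x<x^\ast,\ 0<w<w^\ast\}$ works. On $w=w^\ast$ one has $\dot w=r\alpha_\Coop(x-x^\ast)<0$; on $x=x^\ast$ one has $\dot x=x^\ast[b(w)-b(w^\ast)]<0$; on $w=0$ one has $\dot w>0$; and $\{x=0\}$ is invariant. So the rectangle is forward invariant, and $x<x^\ast$ excludes $U_{\mathrm{on}}$. Convergence back to $U_\ast$ would be a homoclinic loop, which your Dulac argument already excludes, so your trichotomy gives $U_{\mathrm{off}}$.

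The paper instead uses the smaller region below the nullcline $w=\zeta(x)$, $\zeta=b^{-1}\circ d$. There $\dot x<0$, which rules out $U_\ast$ directly. The branch enters this region because $s_u-\zeta'(x^\ast)=\mu_u/(x^\ast b'(w^\ast))>0$.

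Your trichotomy also has a smaller omission. The weight $1/x$ is singular on the axis, so before applying Dulac and Poincar\'e--Bendixson you must treat $\omega$-limit sets that touch $\partial Q$, as the paper does. A point $(0,w)$ in $\omega(u)$ brings in $U_{\mathrm{off}}$, which attracts a neighbourhood, so $\omega(u)=\{U_{\mathrm{off}}\}$. A point $(x,0)$ with $x>0$ is impossible, since its backward orbit leaves the closed quadrant.
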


The proof is given in Appendix~\ref{app:action}. Its lower-bound step uses
only that the endpoint lies outside $\mathcal B_{\mathrm{on}}(r)$, hence it
also gives $V_r^Q(z)\ge\Delta V(r)$ for every
$z\in Q\setminus\mathcal B_{\mathrm{on}}(r)$. The proposition concerns an
infimum of action and does not compare individual paths. Reaching $U_\ast$
does not force extinction, since noise near the saddle can send the process
to either basin.

The identification \eqref{eq:saddle-off} concerns actions. Relating it to
the mean of the extinction time \eqref{eq:collapse-time-off} additionally
requires metastable exit estimates for the jump process, appropriate to the
target $R_{\mathrm{off}}$ and to the vanishing of the rates on the coordinate
axes; we do not derive them here. Subject to such estimates, the prediction is
\begin{equation}
 \label{eq:lifetime-prediction}
 \lim_{N\to\infty}\frac1N\log\E\bigl[\tau_N^{\mathrm{off}}(r)\bigr]
 =\Delta V(r).
\end{equation}
In that case a change $\delta$ in the barrier multiplies the mean lifetime by
$e^{N\delta+o(N)}$, so even a modest change in $\Delta V$ has a large effect
when $N$ is large. Section~\ref{sec:ssa} states which stopping times are
measured by simulation.

The extinction time \eqref{eq:collapse-time-off} counts every failed attempt
from the saddle region. If successive attempts were independent, the number of
attempts would be geometric with success probability $p_N$, and the mean
extinction time would be the mean duration of an attempt divided by $p_N$.
A success probability with $\log p_N=o(N)$ would then change the mean
extinction time only by a subexponential factor, leaving the exponent in
\eqref{eq:lifetime-prediction} unchanged. We do not prove that the jump process
behaves in this way; Section~\ref{sec:ssa} reports the success probabilities
observed in simulation.

\subsection{Barrier after eliminating the signal}
\label{ssec:eliminated}

Eliminating the signal removes the extra freedom in $w$. By~\eqref{eq:slaved}, it imposes
$w=\bar w(x)=\alpha_\Coop x/\kappa$ and leaves a one-dimensional birth-death
process for $x$, with birth rate $x\,\bar b(x)$ and death rate $x\,d(x)$. The
eliminated-signal Hamiltonian is then
\[
 \mathbb H_\infty(x,p)
 =x\bigl[\bar b(x)(e^p-1)+d(x)(e^{-p}-1)\bigr],
\]
i.e. the full Hamiltonian~\eqref{eq:ham} at $w=\bar w(x)$ and $q=0$, where
$\mathbb H_f$ vanishes. Notice that the signal itself has not been switched
off: its quasi-steady value still enters the cell division rate through
$\bar b(x)=b(\bar w(x))$. We denote the barrier of this model by
$\Delta V_\infty$. This is a definition, not an interchange of limits: the
relation $w=\bar w(x)$ is the limit $r\to\infty$ taken after $N\to\infty$, and
whether $\Delta V_\infty=\lim_{r\to\infty}\Delta V(r)$ is a separate question,
left for~\cite{ayala2026cooperation}.

The advantage of the eliminated-signal model is that its barrier can be calculated
explicitly as follows. Along a free-time minimizing orbit the Hamiltonian is conserved
and has value zero. Setting $z=e^p>0$, from $\mathbb H_\infty(x,p)=0$ and $x>0$ we obtain
\[
 0=\bar b(x)(z-1)+d(x)(z^{-1}-1),
 \qquad
 0=(z-1)\bigl(\bar b(x)z-d(x)\bigr).
\]
There are therefore two zero-energy branches
\begin{equation}
 \label{eq:two-branches}
 p=0,
 \qquad
 p=p^\ast(x):=\log\frac{d(x)}{\bar b(x)}.
\end{equation}
Their directions follow from Hamilton's equation
$\dot x=\partial_p\mathbb H_\infty$
\[
 \left.\dot x\right|_{p=0}=x\bigl(\bar b(x)-d(x)\bigr)>0,
 \qquad
 \left.\dot x\right|_{p=p^\ast}=x\bigl(d(x)-\bar b(x)\bigr)<0
 \quad\text{for }x\in(x^\ast,x_{\mathrm{on}}).
\]
The first branch is the deterministic motion back toward the cooperative
state. The second branch moves toward the threshold and is the instanton of
the eliminated-signal model.

Because $\mathbb H_\infty=0$ on this branch, its action reduces to the line
integral of the momentum
\begin{equation}
 \label{eq:Vinf}
 \begin{aligned}
 \Delta V_\infty
 &=\int p^\ast(x)\,dx
   =\int_{x_{\mathrm{on}}}^{x^\ast}
       \log\frac{d(x)}{\bar b(x)}\,dx \\
 &=\int_{x^\ast}^{x_{\mathrm{on}}}
       \log\frac{b(\bar w(x))}{d(x)}\,dx .
 \end{aligned}
\end{equation}
Notice that on $(x^\ast,x_{\mathrm{on}})$ we have $\bar b(x)>d(x)$, hence the last
integrand is positive, i.e. the instanton moves against the deterministic drift. At $c=0.36$,
\[
 \Delta V_\infty=0.06138.
\]

\subsection{What changes at finite $r$}

For finite $r$, the zero-energy condition is
\[
 \mathbb H(x,w,p,q)=0.
\]
This is one equation for the two momenta $p$ and $q$, so it no longer selects a
unique momentum as a function of the state. Where the quasipotential is
differentiable it still satisfies $\mathbb H(U,\nabla V_r^Q(U))=0$, and along a
smooth minimizing path $(p,q)=\nabla V_r^Q$; but this Hamilton-Jacobi equation
in two variables does not reduce to a choice between branches as in
\eqref{eq:two-branches}. The cell and signal coordinates also remain coupled
through the coefficients of~\eqref{eq:ham}. These are the reasons why no
one-dimensional quadrature analogous to~\eqref{eq:Vinf} is available and why
the finite-$r$ problem must be minimized numerically in Section~\ref{sec:mam}.
The network is not reversible, which is why the minimizing path need not be the
deterministic relaxation path run backwards (Section~\ref{sec:barrier}); it
does not by itself prevent a potential.

Nevertheless, the eliminated-signal instanton gives a useful comparison path for large
$r$. Since $\mathbb H_s$ contains only $p$ and $\mathbb H_f$ only $q$, the
Legendre transform separates
\begin{equation*}
 \mathbb L(U,v)
 =\mathbb L_s(U,v_1)
  +r\,\mathbb L_f\!\left(U,\frac{v_2}{r}\right).
\end{equation*}
Let $x(t)$ be the eliminated-signal instanton,
\begin{equation}
 \label{eq:elim-instanton}
 \dot x=x\,[d(x)-\bar b(x)],
\end{equation}
and lift it to the plane by setting $U_{\mathrm{sl}}(t)=(x(t),\bar w(x(t)))$.
Its cell contribution is exactly $\Delta V_\infty$; the remaining one is the
cost of making the signal follow the slaved relation at a nonzero speed. Along
this path the signal production and removal rates are equal,
$a:=\alpha_\Coop x=\kappa\bar w(x)$, hence $\mathbb H_f=2a(\cosh q-1)$ and
$\mathbb L_f(U,u)=u^2/(4a)+O(u^4)$. With $\dot w=(\alpha_\Coop/\kappa)\dot x$
and, by \eqref{eq:elim-instanton},
$\int\dot x^2/x\,dt=\int_{x^\ast}^{x_{\mathrm{on}}}[\bar b(x)-d(x)]\,dx$, we
obtain
\[
 r\mathbb L_f\!\left(U,\frac{\dot w}{r}\right)
 =\frac1r\frac{\alpha_\Coop\dot x^2}{4\kappa^2x}+O(r^{-3}).
\]
Hence the lifted path has action
\begin{equation}
 \label{eq:slaved-upper-bound}
 \mathcal S(U_{\mathrm{sl}})
 =\Delta V_\infty+\frac{C_{\mathrm{sl}}}{r}+O(r^{-3}),
 \qquad
 C_{\mathrm{sl}}
 =\frac{\alpha_\Coop}{4\kappa^2}
  \int_{x^\ast}^{x_{\mathrm{on}}}
    \bigl[\bar b(x)-d(x)\bigr]\,dx.
\end{equation}
The instanton reaches the fixed points only at infinite times. Truncating it
near both endpoints and then letting the endpoint corrections shrink gives an
admissible sequence for~\eqref{eq:target}, so~\eqref{eq:slaved-upper-bound}
implies
\begin{equation}
 \label{eq:barrier-upper-bound}
 \Delta V(r)
 \le \Delta V_\infty+\frac{C_{\mathrm{sl}}}{r}+O(r^{-3}).
\end{equation}
At $c=0.36$, $C_{\mathrm{sl}}=0.06579$. The measured coefficient in
Section~\ref{sec:results} is smaller, approximately $0.0354$, because the true
finite-$r$ path can lower its action by leaving the slaved relation.

The comparison path proves only the upper bound~\eqref{eq:barrier-upper-bound}.
By itself it does not prove that $\Delta V(r)\ge\Delta V_\infty$, nor that the
leading correction is exactly $1/r$, and both are tested in
Section~\ref{sec:results}.

%=====================================================================
\section{Computing the barrier: full case}
\label{sec:numerics}

We develop the two routes in turn.

\subsection{Minimum action}
\label{sec:mam}

\subsubsection*{Discretization}

We compute \eqref{eq:target} by a minimum action method~\cite{e2004minimum}. For a horizon $T$ and a uniform mesh $t_k=k\Delta t$ with $\Delta t=T/M$, we
represent a path by its nodes $\boldsymbol U=(U_0,\ldots,U_M)$ with
$U_0=U_{\mathrm{on}}$ and $U_M=U_\ast$ held fixed, and approximate its action
by
\begin{equation}
 \label{eq:discrete-action}
 \mathcal S_{T,M}^{(r)}(\boldsymbol U)
 :=\sum_{k=0}^{M-1}\Delta t\,
 \mathbb L^{(r)}
 \left(\frac{U_k+U_{k+1}}{2},\;\frac{U_{k+1}-U_k}{\Delta t}\right),
\end{equation}
i.e. a midpoint rule in the state variable with forward-difference velocities.
The interior nodes $U_1,\ldots,U_{M-1}$ are the optimization variables.
Since the endpoint $U_M=U_\ast$ is imposed, the method approximates the
saddle barrier \eqref{eq:target}. It does not select an endpoint, and in
particular it does not locate the cheapest point of the threshold line
$\{x=x^\ast\}$.

\subsubsection*{Evaluating the Lagrangian}

The implementation evaluates the Legendre transform pointwise. For a pair of
opposite unit jumps with positive intensities $\lambda_+$ and $\lambda_-$,
write
\begin{equation}
 \label{eq:explicit-legendre}
 p^\ast(v;\lambda_+,\lambda_-)
 =\log\frac{v+\sqrt{v^2+4\lambda_+\lambda_-}}{2\lambda_+},
 \qquad
 \ell(v;\lambda_+,\lambda_-)
 =vp^\ast-\sqrt{v^2+4\lambda_+\lambda_-}
             +\lambda_+ +\lambda_- .
\end{equation}
The reaction directions in Figure~\ref{fig:network} are axis-aligned, hence the
local rate is explicitly
\begin{equation}
 \label{eq:explicit-L}
 \mathbb L(U,v)=
 \ell\bigl(v_x;xb(w),xd(x)\bigr)
 +\ell\bigl(v_w;r\alpha_\Coop x,r\kappa w\bigr),
\end{equation}
where the two intensities are passed as the second and third arguments of
$\ell$. This closed form is special to the present network.

\subsubsection*{Implementation}

For transfer to networks with coupled jump directions, the production code
computes the maximizing momentum by a generic damped Newton solve of
\[
 v_k=\partial_p\mathbb H^{(r)}(U_k,p_k),
 \qquad
 \mathbb L^{(r)}(U_k,v_k)
 =\inner{p_k}{v_k}-\mathbb H^{(r)}(U_k,p_k),
\]
which agrees with \eqref{eq:explicit-legendre} and \eqref{eq:explicit-L} to
$4\times10^{-15}$ on $2000$ random state-velocity pairs. By the envelope
theorem~\cite{rockafellar1970convex}, $\partial_v\mathbb L^{(r)}=p^\ast$ and
$\partial_U\mathbb L^{(r)}=-\partial_U\mathbb H^{(r)}$ at that momentum, so
gradients need no further Legendre solves. The interior nodes are optimized by
L-BFGS-B~\cite{byrd1995limited,zhu1997algorithm} in the implementation
of~\cite{virtanen2020scipy}, with \texttt{ftol}$=10^{-14}$,
\texttt{gtol}$=10^{-10}$ and trial states kept above $10^{-9}$, starting from
the straight segment from $U_{\mathrm{on}}$ to $U_\ast$.

\subsubsection*{Calibration and convergence}

As an end-to-end calibration, we apply the same discretization and optimizer to
the one-dimensional eliminated-signal problem. With the production settings
$T=40$, $M=2000$, its action is $0.0613801$, against
$\Delta V_\infty=0.0613795$ from \eqref{eq:Vinf}, a relative error of
$0.0010\%$.

That calibration exercises the one-dimensional eliminated-signal problem only. A
second one exercises the two-coordinate minimizer itself, at finite $r$,
against a closed form the solver never uses. Near an asymptotically stable
equilibrium the rate functional of Theorem~\ref{thm:ldp} gives a quadratic
quasipotential, i.e.
$V(U)=\tfrac12\inner{U-U_{\mathrm{on}}}{H(U-U_{\mathrm{on}})}
+O(\lvert U-U_{\mathrm{on}}\rvert^3)$, whose Hessian solves
$HJ_r+J_r^{\!\top}H+HDH=0$, with $D$ as in \eqref{eq:diffusion}. Multiplying that identity by $H^{-1}$ on both sides
returns the Lyapunov equation of Theorem~\ref{thm:clt}, hence
\begin{equation}
 \label{eq:hessian}
 H=\Sigma^{-1},
\end{equation}
i.e. the quasipotential Hessian is the inverse stationary covariance of the
Gaussian regime, available from a $2\times2$ Lyapunov solve. The identity
\eqref{eq:hessian} does not presume reversibility, which the network lacks
(Kolmogorov's criterion fails around every unit cell of the lattice): it is an
algebraic consequence of the Riccati and Lyapunov equations above. We therefore
minimize the action from $U_{\mathrm{on}}$ to $U_{\mathrm{on}}+\varepsilon z$
over six unit directions $z$ and two excursion sizes, extrapolate the quadratic
forms $2\varepsilon^{-2}\mathcal S$ to $\varepsilon\to0$, and obtain $H$ by
least squares. Its three independent entries agree with \eqref{eq:hessian} to a
relative error of at most $2.5\times10^{-4}$ over $r\in\{0.5,1,2,4,8\}$.
Notice that this check shares nothing with the one-dimensional calibration and
nothing with the exit-time experiment of Section~\ref{sec:ssa}: it tests the
finite-$r$ two-coordinate minimizer against Theorem~\ref{thm:clt}. It is
nevertheless local, hence it constrains the solver near $U_{\mathrm{on}}$ and
says nothing about the global minimum in \eqref{eq:target}.

We separate horizon and mesh effects at the smallest and largest values of
$r$. At fixed $\Delta t=0.02$ the $r=0.5$ barrier changes by
$7.3\times10^{-5}$ between $T=20$ and $T=30$ and by at most $10^{-6}$ up to
$T=60$, while $\Delta V(64)$ does not change; at fixed $T=40$, refining
$\Delta t$ from $0.08$ to $0.01$ changes $\Delta V(0.5)$ by $3\times10^{-6}$ and
$\Delta V(64)$ by $10^{-5}$, and the last halving by at most $10^{-6}$. Both
studies bracket the production setting $T=40$, $M=2000$.

The production sweep uses $T=40$, $M=2000$ and an outer iteration budget of
$60000$. The default budget of $600$ iterations fails upward, returning
$0.128412$ at $r=0.5$ instead of $0.122477$. We compute $r\in\{0.5,1,2,4,8,16,32,64\}$ for
$c\in\{0.20,0.30,0.36,0.45,0.50\}$, in $234$ seconds for all $40$
minimizations; code, settings and barriers are in the accompanying
repository~\cite{qsmmsccode}.

Three further checks address the global and the discrete character of the
minimum at $c=0.36$ and $r\in\{0.5,1,4,16,64\}$. First, besides the straight
segment we start from the same segment bowed up and down in $w$ and from the
lifted eliminated-signal instanton of Section~\ref{sec:barrier}. All four
starts converge to the same path, within Hausdorff distance $4\times10^{-3}$,
and to the same action within $2.2\times10^{-7}$. Second, a free-time
minimizer has $\mathbb H=0$ along the path, and the computed paths give
$\max_k\lvert\mathbb H(U_k,p_k)\rvert\le9\times10^{-7}$. Third, the midpoint
sum \eqref{eq:discrete-action} is not itself an upper bound for the infimum,
whereas the exact action of any admissible path is. The exact action of the
piecewise-linear interpolant of each computed path, integrated by Gauss-Legendre
quadrature on every interval, lies $7\times10^{-8}$ to $4\times10^{-7}$ below
the midpoint value, hence the reported barriers are upper bounds for
\eqref{eq:target}, and none of the starts reveals a lower minimum.

\subsubsection*{Extraction of the coefficient}

For each displayed value of $c$ we extract the coefficient
$A_{\mathrm{num}}(c)$ from the large-$r$ numerical barriers through
\begin{equation}
 \label{eq:fit}
 \Delta V_{\mathrm{num}}(r;c)
 =\Delta V_\infty(c)+\frac{A_{\mathrm{num}}(c)}{r}+O(r^{-2}),
\end{equation}
with $\Delta V_\infty(c)$ fixed at its quadrature value, i.e.
$A_{\mathrm{num}}(c)$ is the plateau of
$r[\Delta V_{\mathrm{num}}(r;c)-\Delta V_\infty(c)]$ at large $r$. We read it
at $r=64$ and use the difference from $r=32$ as a residual estimate. The
boundary-layer calculation of~\cite{ayala2026cooperation} predicts
\begin{equation}
 \label{eq:A-theory}
 A(c)=\frac{\alpha_\Coop}{\kappa^2}
 \int_{x^\ast}^{x_{\mathrm{on}}}
 \frac{b'(\bar w(x))\bar F(x)}{b(\bar w(x))}
 \left[1-\frac{x b'(\bar w(x))}{b(\bar w(x))}\right]dx .
\end{equation}
We quote \eqref{eq:A-theory} to make the comparison reproducible; its derivation
is not used in the minimization.

\subsection{Exit times}
\label{sec:ssa}

The minimum-action calculation and the expansion \eqref{eq:crossover-law} share
the same variational structure, hence agreement between them would not be an
independent test. We therefore simulate the jump process directly. The primary
stopping time is the extinction time \eqref{eq:collapse-time-off}. Along the way
we also record the first time the cell density reaches the threshold,
\begin{equation}
 \label{eq:threshold-time}
 \tau_N^{\mathrm{thr}}(r):=\inf\{t\ge0:X_t^N\le x^\ast\},
\end{equation}
where $X_t^N$ denotes the cell density. For a process started above the
threshold, $\tau_N^{\mathrm{thr}}(r)\le\tau_N^{\mathrm{off}}(r)$, since
$a_{\mathrm{off}}<x^\ast$ and the density moves by jumps of size $1/N$.
Assuming metastable exit asymptotics~\cite{freidlin2012random}, we write, for
either stopping time,
\begin{equation}
 \label{eq:exit-asymptotic}
 \E[\tau_N(r)]=C_r(N)\exp\{N\Delta V_\tau(r)\},
 \qquad
 \frac{\log C_r(N)}{N}\to0,
\end{equation}
where $\Delta V_\tau=\Delta V$ for the extinction time by
\eqref{eq:lifetime-prediction} and $\Delta V_\tau=\Delta V_{\mathrm{thr}}$, defined
below, for the threshold time. We do not prove \eqref{eq:exit-asymptotic} for
this jump process, whose rates vanish on the axes; the regression below tests it
over the simulated sizes. Notice that this route constructs no instanton,
minimizes no action, and uses no boundary-layer ansatz.

\subsubsection*{Simulation protocol and estimator}

With the help of the stochastic simulation
algorithm~\cite{gillespie1977exact,andersonkurtz2015} we simulate the four
channels of Figure~\ref{fig:network} exactly. Each replica starts from the
integer state $n^\Coop(0)=[Nx_{\mathrm{on}}]$, $n^{\mathrm w}(0)=[N\bar
w(x_{\mathrm{on}})]$, which depends on $N$ but not on $r$. We use
$r\in\{0.5,1,2,4,8\}$ at $c=0.36$, with five to nine population sizes per rate
in $40\le N\le140$, and each pair $(r,N)$ has its own reproducibly seeded random
stream. A first campaign, with $1500$ to $2500$ replicas per pair, stops at
$\tau_N^{\mathrm{thr}}$. A second campaign follows $1500$ replicas per pair until
entry into $R_{\mathrm{off}}=[0,0.3]\times[0,0.7]$, which satisfies
\eqref{eq:def-R-off} at every cost used, and records $\tau_N^{\mathrm{thr}}$ on
the way. It uses a separately written simulator, checked against five stored
points of the first campaign (combined $p=0.20$).

Replicas are run to a deterministic horizon $H_{r,N}$. For replica $k$ let
$t_k$ be the smaller of $\tau_{N,k}(r)$ and $H_{r,N}$, and let $\delta_k=1$ if
$\tau_{N,k}(r)\le H_{r,N}$ and $\delta_k=0$ otherwise. Under the exponential
exit-time approximation, the right-censored maximum-likelihood estimator of the
escape rate is given by
\[
 \widehat\lambda_{r,N}:=\frac{\sum_k\delta_k}{\sum_k t_k},
 \qquad
 \widehat{\E[\tau_N(r)]}:=\widehat\lambda_{r,N}^{-1}.
\]
The horizon is set from the minimum-action barrier and no prefactor, i.e.
$H_{r,N}=\eta\,e^{N\Delta V_{\mathrm{num}}(r)}$, with $\eta=30$ for the
extinction time, since $\E[\tau_N^{\mathrm{off}}]$ is $3$ to $4$ times
$\exp\{N\Delta V_{\mathrm{num}}(r)\}$ at these sizes, and $\eta=1.5$ for the
threshold time ($0.3$ at $r=0.5$ with $N\ge90$). Then $13$ of the $51\,000$
extinction times are censored, and the threshold escape fractions range from
$0.53$ to $1.00$. The action value sets the computational budget but is not
inserted into the estimated mean. For each $r$, by weighted least squares over
the simulated sizes, we fit
\begin{equation}
 \label{eq:regression}
 \log\widehat{\E[\tau_N(r)]}=a_r+b_r\log N+N\Delta V_{\mathrm{SSA}}(r),
\end{equation}
with weights $n_{\mathrm{exit}}(r,N)$, since
$\operatorname{var}\log\widehat{\E[\tau_N(r)]}\approx n_{\mathrm{exit}}^{-1}$.
The nuisance term $b_r\log N$ allows for a leading algebraic dependence of the
prefactor $C_r(N)$, and uncertainties are inflated by
$\max(1,\chi^2_{\mathrm{red}})^{1/2}$. Notice that the regression uses every
simulated size rather than the slope between two of them. For sizes $N_1<N_2$,
the model \eqref{eq:regression} gives
\[
 \frac{\log\E[\tau_{N_2}]-\log\E[\tau_{N_1}]}{N_2-N_1}
 =\Delta V_{\mathrm{SSA}}(r)+b_r\,\frac{\log(N_2/N_1)}{N_2-N_1},
\]
hence a two-point slope underestimates the barrier exactly when $b_r<0$.

The estimator behaves as the exponential approximation requires. Over five
independent seeds at $r=0.5$, $N=60$, the between-seed standard deviation of
$\log\widehat{\E[\tau_N^{\mathrm{thr}}]}$ is $0.0236$, against the nominal
$1500^{-1/2}=0.0258$, and horizons imposed post hoc change the censored
estimate by at most $2\%$ at escape fractions near $0.65$, with no systematic
trend. At all $34$ pairs of the second campaign the extinction times satisfy
the exponential identities $\operatorname{sd}/\E=1$ and
$\operatorname{median}/\E=\log2$ within sampling error: the ratios lie in
$0.946$--$1.042$ and $0.653$--$0.730$, and the Kolmogorov-Smirnov test against
the exponential law with the fitted mean, censored at the horizon, gives
$p\ge0.035$ at every pair, which is not more extreme than expected among $34$
tests.

\subsubsection*{The extinction time}

Applied to $\tau_N^{\mathrm{off}}$, the regression \eqref{eq:regression} gives the
slopes $\Delta V^{\mathrm{off}}_{\mathrm{SSA}}(r)$ of
Table~\ref{tab:exit-barriers}, and Figure~\ref{fig:extinction-scaling} displays
the data, the fits and the residuals. The reduced chi-square lies between
$0.40$ and $1.61$, and a bootstrap over trajectories gives standard errors
within $0.0004$ of the nominal ones. Halving both sides of the rectangle changes
the slopes by at most $5\times10^{-4}$. Removing the smallest size changes them
by at most $0.011$, and removing the two smallest by at most $0.0074$ at the
three rates where four sizes remain. Moreover, $Na_{\mathrm{off}}$ and
$Nb_{\mathrm{off}}$ are integers at every simulated size except $N=47,55,62,77,85,92$
($r=1$) and $N=65,95$ ($r=2$), and the standardized residuals at these sizes are
at most $1.1$ in absolute value. This is expected, since $R_{\mathrm{off}}$ lies
deep in the off basin, where arrival costs no additional action.

The regression model matters more. Without the term $b_r\log N$, every slope
decreases, by $0.0019$ to $0.0088$, and lies below $\Delta V_{\mathrm{num}}(r)$,
by $0.0012$ to $0.0033$. With it, the slopes lie above $\Delta V_{\mathrm{num}}(r)$
at four rates and $10^{-4}$ below it at $r=4$. The fitted $b_r$ and the slope
have correlation $-0.99$ at every rate, hence over the simulated sizes the data
cannot separate the two models, and we report both in
Table~\ref{tab:exit-barriers}. The campaign required about $1.3\times10^5$
core-seconds.

\begin{figure}[tbp]
 \centering
 \includegraphics[width=\textwidth]{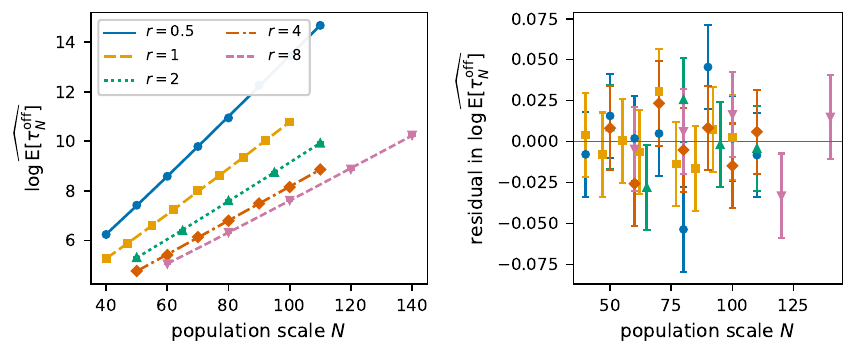}
 \caption{Extinction-time barrier extraction at $c=0.36$. Left: estimated mean
 extinction times \eqref{eq:collapse-time-off} and the weighted fits
 \eqref{eq:regression}. Right: the fit residuals in the same log units, with
 nominal $n_{\mathrm{exit}}^{-1/2}$ error bars. All quantities are
 dimensionless.}
 \label{fig:extinction-scaling}
\end{figure}

\subsubsection*{A cheaper threshold point and recovery}

The stopping rule \eqref{eq:threshold-time} allows every signal value, whereas
\eqref{eq:target} fixes both coordinates. The corresponding variational
quantity is the interior threshold barrier
$\Delta V_{\mathrm{thr}}(r):=\inf_{w>0}V_r^Q((x^\ast,w))\le\Delta V(r)$. By
Proposition~\ref{prop:saddle-off} and the remark after it, the points
$(x^\ast,w)$ with $w<w^\ast$, which lie in the off basin, are no cheaper than
the saddle; the points with $w>w^\ast$ lie in the cooperative basin, and
whether one of them is cheaper has to be checked. We computed the profile
$w\mapsto V_r^Q((x^\ast,w))$ with the method of Section~\ref{sec:mam}, the
endpoint now fixed at $(x^\ast,w)$, at $r\in\{0.5,1,2,4,8\}$ for
$w^\ast<w\le w^\ast+5$, from several initial paths, with
$T\in\{40,80,160\}$ and $\Delta t\in\{0.04,0.02,0.01\}$, the paths being
constrained to $x\ge x^\ast$ before the endpoint (which does not change the
infimum over the line, since $\mathbb L\ge0$). At $r=1,2,4,8$ the computed
profile increases in $w$ and no computed point costs less than the saddle, so
there we take the computed threshold barrier to be $\Delta V_{\mathrm{num}}(r)$;
this is a computed agreement at these rates, not a statement for all $r\ge1$.
At $r=0.5$ the profile has an interior minimum at $w=1.808$, i.e.\ $0.69$ above
$w^\ast$, with action $0.11308$ against $0.12248$ at the saddle. The difference,
$9.4\times10^{-3}$, compares with a variation below $10^{-6}$ under the
refinements, and an independently written solver reproduces both values to
$10^{-6}$. The computed threshold value at $r=0.5$ is therefore $7.7\%$ below
the computed saddle value, and we write $\Delta V_{\mathrm{thr}}(0.5)\le0.1131$
for this computed upper estimate. At small $r$ the cheapest threshold crossing
keeps the signal high, and the deterministic flow then returns to
$U_{\mathrm{on}}$.

The threshold times of the first campaign give the slopes of
Table~\ref{tab:threshold-barriers}, with reduced chi-square $15.3$, $6.0$,
$2.3$, $0.3$ and $1.2$ at $r=0.5,1,2,4,8$. The excess at $r=0.5$ and $r=1$ is
not sampling noise, since a parametric bootstrap gives standard errors equal to
the nominal ones. It comes from the lattice: in counts, the stopping rule is
$X^N\le\lfloor Nx^\ast\rfloor/N$, i.e. the effective threshold lies
$\{Nx^\ast\}/N$ below $x^\ast$, where $\{Nx^\ast\}=Nx^\ast-\lfloor
Nx^\ast\rfloor$ oscillates with $N$. Adding a term $g_r\{Nx^\ast\}$ to
\eqref{eq:regression} lowers the reduced chi-square to $1.2$ and $0.3$, and
replacing $x^\ast$ by a point drawn uniformly from $(0.40,0.72)$ gives a
reduction at least as large with probability $0.012$ and $0.010$. Stopping
directly at $X^N\le(\lfloor Nx^\ast\rfloor-k)/N$, $k=-1,\dots,3$, at $N=60,80$
with $20\,000$ replicas, every lattice step raises
$\log\widehat{\E[\tau_N^{\mathrm{thr}}]}$ by $0.229$--$0.255$ at $r=0.5$ and by
$0.164$--$0.198$ at $r=1$, with standard error $0.010$ and no trend in $k$ or
$N$, in agreement with the fitted $g_r$. At $r=1$ the computed threshold barrier
is attained at the saddle, where the first variation of the action along the
line vanishes, hence the coefficient there is a finite-$N$ effect; it is
consistent with the first crossings at $r=1$, which have $w>w^\ast$ in $99\%$ of
trajectories. We expect $g_1$ to decrease with $N$, but two sizes do not resolve
this. The threshold fits are also more sensitive to the prefactor than the
extinction fits: removing the two smallest sizes moves the $r=1$ slope by
$0.040$, and without $b_r\log N$ all five slopes fall below
$\Delta V_{\mathrm{thr}}(r)$. We therefore use them as a finite-$N$ consistency
check.

Threshold crossings are often followed by recovery. Of all crossings, the
fraction that returns to the box $[x_{\mathrm{on}}\pm0.15]\times
[\bar w(x_{\mathrm{on}})\pm0.30]$ before reaching $R_{\mathrm{off}}$ is
$0.86$--$0.97$ at $r=0.5$ and $0.37$--$0.46$ at $r=8$, generally growing with
$N$. At $r=0.5$ the signal at the first crossing exceeds $w^\ast$ in more than
$99\%$ of trajectories, with median excess between $0.68$ and $0.83$, close to
the minimizing endpoint above; at $r=8$ it does so in $81$--$83\%$. Such
crossings lie in the cooperative basin (Section~\ref{sec:off-target}). At
$r=0.5$ the probability that a crossing with $w>w^\ast$ reaches
$R_{\mathrm{off}}$ before returning falls from $0.14$ at $N=40$ to $0.03$ at
$N=110$. Crossings with $w\le w^\ast$, which lie in the off basin, still return
in $15$--$40\%$ of cases at every rate, with no clear trend in $N$.

Finally, we test whether the success probability near the saddle stays of order
one, as the attempt argument of Section~\ref{sec:barrier} requires. We start the
process at the lattice point nearest to $U_\ast+sN^{-1/2}v_u$, where $v_u$ is
the unit unstable eigenvector at $U_\ast$ and $s\in\{-1,0,1\}$, and stop at the
first entry into $R_{\mathrm{off}}$ or into the box around $U_{\mathrm{on}}$,
with $20\,000$ trajectories for each $r\in\{0.5,1,2,4,8\}$ and
$N\in\{50,100,200,\dots,3200\}$. Started at the saddle, the probability of entering
$R_{\mathrm{off}}$ first decreases from $0.63$--$0.64$ at $N=50$ to
$0.51$--$0.52$ at $N=3200$, with $95\%$ intervals of half-width at most
$0.007$; at $N=3200$ it is $0.74$--$0.79$ for $s=-1$ and $0.23$--$0.29$ for
$s=1$. Over this range it is therefore of order one, although the finite range
of $N$ cannot establish the limit.

%=====================================================================
\section{Finite $r$ raises the extinction barrier}
\label{sec:results}

Figure~\ref{fig:action-results} shows the minimum-action results. The left
panel exposes the mechanism: finite-$r$ minimum-action paths bow above the
quasi-steady signal relation and approach it as $r$ increases. The middle
panel shows the corresponding barriers. At $r=1$ the barrier is $0.0953$,
i.e. it exceeds the eliminated-signal value $\Delta V_\infty=0.0614$ by $55\%$,
and between $r=0.5$ and $r=64$ it changes by a factor of $1.98$, although the deterministic equilibria and their stability types are
independent of $r$. The right panel tests the exponent in
\eqref{eq:crossover-law} without using the analytical coefficient: for each of
five costs, $r[\Delta V_{\mathrm{num}}(r)-\Delta V_\infty]$ approaches a plateau.
Over $r\ge4$ the plateau spread is $0.57\%$ at $c=0.36$ and at most $3.5\%$
across the five costs. The coefficients extracted through \eqref{eq:fit} agree
with the externally supplied prediction \eqref{eq:A-theory} to relative errors
$0.56\%$, $0.36\%$, $0.24\%$, $0.08\%$ and $0.01\%$ as $c$ ranges from $0.20$ to
$0.50$.

\begin{figure}[tbp]
 \centering
 \includegraphics[width=\textwidth]{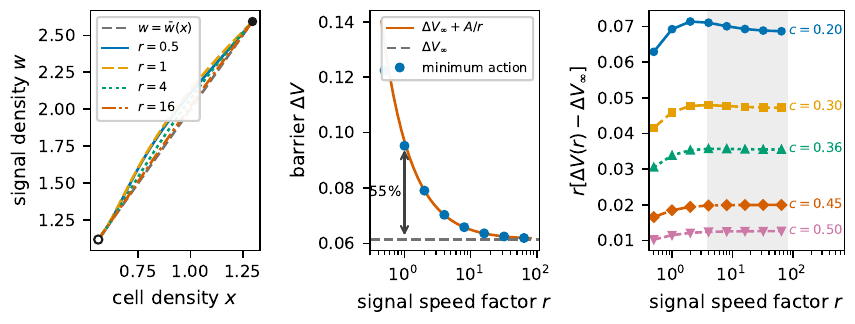}
 \caption{Minimum-action crossover. Left: minimum-action paths from
 $U_{\mathrm{on}}$ (filled point) to the saddle $U_\ast$ (open point) at
 $c=0.36$, with the endpoint imposed as in \eqref{eq:target}; the dashed line is
 the quasi-steady relation. Middle: finite-$r$
 barriers, the eliminated-signal value, and $\Delta V_\infty+A/r$ using
 \eqref{eq:A-theory}. Right: the rescaled excess barrier at five costs. A
 plateau identifies the $1/r$ scaling and its level gives $A_{\mathrm{num}}$;
 the shaded region marks $r\ge4$. All quantities are dimensionless.}
 \label{fig:action-results}
\end{figure}

Table~\ref{tab:exit-barriers} compares the extinction-time slopes
$\Delta V^{\mathrm{off}}_{\mathrm{SSA}}(r)$ with the minimum-action saddle
barrier. These slopes test the prediction \eqref{eq:lifetime-prediction}
directly, including the time spent in failed attempts. With the logarithmic
term they lie within two fitted standard errors of $\Delta V_{\mathrm{num}}(r)$ at
every rate: the differences are $1.97$, $1.20$, $0.92$, $-0.02$ and $1.06$
standard errors at $r=0.5,1,2,4,8$. Without it every slope lies below
$\Delta V_{\mathrm{num}}(r)$, by $0.0012$ to $0.0033$. Since the simulated range
of $N$ cannot separate the two models, we rely only on what holds under both.
First, $\Delta V_{\mathrm{num}}(r)$ lies between the two fits at $r=0.5,1,2,8$,
and exceeds the upper one by $10^{-4}$ at $r=4$. Second, $\Delta V_\infty$ is
excluded at $r=0.5,1,2$ by at least $4.4$ fitted standard errors with the
logarithmic term and at least $24$ without it. At $r=4$ and $r=8$ the two
candidates differ by only $13\%$ and $7\%$, the slopes with the logarithmic term
lie $2.0$ and $2.3$ fitted standard errors from $\Delta V_\infty$, and we claim
no exclusion there. The minimization alone bounds each infimum from above, so it
cannot establish that the barrier exceeds $\Delta V_\infty$; what excludes the
eliminated-signal value is the simulation, without using the action solver.
The minimum-action calculation supplies in return the mechanism and the $1/r$
law, neither of which the exclusion alone would reveal.

\begin{table}[htbp]
\caption{Extinction barriers at $c=0.36$. $\Delta V_{\mathrm{num}}$:
minimum-action saddle barrier; $\Delta V^{\mathrm{off}}_{\mathrm{SSA}}$: slope of
the simulated extinction times \eqref{eq:collapse-time-off} under
\eqref{eq:regression}, with its reduced chi-square and degrees of freedom;
``without $\log N$'': the same fit with $b_r=0$. The final column gives
$\Delta V^{\mathrm{off}}_{\mathrm{SSA}}-\Delta V_\infty$, with
$\Delta V_\infty=0.0614$, in units of the fitted standard error, with and without
the logarithmic term. The blue block collects the rates at which both models
exclude $\Delta V_\infty$.}
\label{tab:exit-barriers}
\centering
\small
\renewcommand{\arraystretch}{1.15}
\setlength{\tabcolsep}{4pt}
\begin{tabular}{@{}ccccccc@{}}
\toprule
$\boldsymbol{r}$ & \textbf{sizes} & $\boldsymbol{\Delta V_{\mathrm{num}}(r)}$ & $\boldsymbol{\Delta V^{\mathrm{off}}_{\mathrm{SSA}}(r)}$ & $\boldsymbol{\chi^2_{\mathrm{red}}}$ \textbf{(dof)} & \textbf{without} $\boldsymbol{\log N}$
& \textbf{distance to} $\boldsymbol{\Delta V_\infty}$\\
\midrule
\rowcolor{qsblue!12}
$0.5$ & $8$ & $0.1225$ & $0.1294\pm0.0035$ & $1.61$ $(5)$ & $0.1206\pm0.0007$ & $19$ / $85$\\
\rowcolor{qsblue!12}
$1$   & $9$ & $0.0953$ & $0.0994\pm0.0035$ & $0.40$ $(6)$ & $0.0919\pm0.0004$ & $11$ / $68$\\
\rowcolor{qsblue!12}
$2$   & $5$ & $0.0791$ & $0.0837\pm0.0050$ & $1.15$ $(2)$ & $0.0772\pm0.0006$ & $4.4$ / $24$\\
\rowcolor{qsgray!14}
$4$   & $7$ & $0.0703$ & $0.0702\pm0.0043$ & $0.61$ $(4)$ & $0.0683\pm0.0005$ & $2.0$ / $14$\\
\rowcolor{qsgray!14}
$8$   & $5$ & $0.0658$ & $0.0697\pm0.0037$ & $1.24$ $(2)$ & $0.0646\pm0.0005$ & $2.3$ / $6$\\
\bottomrule
\end{tabular}
\end{table}

At $r=0.5$ the extinction-time slope exceeds the computed threshold barrier
$\Delta V_{\mathrm{thr}}(0.5)\le0.1131$ by $4.7$ fitted standard errors with the
logarithmic term and by $11$ without it, so the extinction time does not follow
the cheaper threshold point. In the second campaign, where both times come from
the same trajectories, the difference between the extinction-time and threshold
slopes at $r=0.5$ is $0.010$, with bootstrap interval $[0.003,0.018]$,
compatible with $\Delta V(0.5)-\Delta V_{\mathrm{thr}}(0.5)\ge0.0094$; since the
threshold fit of that campaign carries the lattice term (reduced chi-square
$12.2$ without it), we do not treat this agreement as a measurement. The
threshold slopes of the first campaign (Table~\ref{tab:threshold-barriers})
agree with $\Delta V_{\mathrm{thr}}(r)$ within two fitted standard errors at
every rate, and the second campaign replicates them within their errors. At
$r=0.5$ neither the fit $0.1194\pm0.0102$ nor the lattice-corrected fit
$0.1176\pm0.0028$ distinguishes $\Delta V_{\mathrm{thr}}(0.5)\le0.1131$ from
$\Delta V(0.5)=0.1225$: the corrected value lies $1.6$ standard errors above the
first and $1.7$ below the second. These are finite-$N$ findings, and they
support the conditional prediction without proving it.

\begin{table}[htbp]
\caption{Threshold barriers at $c=0.36$, main campaign.
$\Delta V_{\mathrm{thr}}$: computed minimum-action threshold barrier of
Section~\ref{sec:ssa}, equal to the saddle value for $r\ge1$ and an upper
estimate at $r=0.5$; $\Delta V_{\mathrm{SSA}}$: slope of the simulated
threshold-crossing times under \eqref{eq:regression}; the next column adds the
lattice term $g_r\{Nx^\ast\}$, which five sizes cannot support at $r=2,8$.
Row shading marks the same rates as in Table~\ref{tab:exit-barriers}.}
\label{tab:threshold-barriers}
\centering
\small
\renewcommand{\arraystretch}{1.15}
\setlength{\tabcolsep}{4pt}
\begin{tabular}{@{}cccccc@{}}
\toprule
$\boldsymbol{r}$ & \textbf{sizes} & $\boldsymbol{\Delta V_{\mathrm{thr}}(r)}$ & $\boldsymbol{\Delta V_{\mathrm{SSA}}(r)}$, $\boldsymbol{\chi^2_{\mathrm{red}}}$ \textbf{(dof)} & \textbf{with} $\boldsymbol{g_r\{Nx^\ast\}}$, $\boldsymbol{\chi^2_{\mathrm{red}}}$ \textbf{(dof)} & $\boldsymbol{g_r}$\\
\midrule
\rowcolor{qsblue!12}
$0.5$ & $8$ & $0.1131$ & $0.1194\pm0.0102$, $15.3$ $(5)$ & $0.1176\pm0.0028$, $1.16$ $(4)$ & $0.269\pm0.034$\\
\rowcolor{qsblue!12}
$1$   & $9$ & $0.0953$ & $0.1092\pm0.0095$, $6.0$ $(6)$ & $0.1009\pm0.0041$, $0.25$ $(5)$ & $0.226\pm0.038$\\
\rowcolor{qsblue!12}
$2$   & $5$ & $0.0791$ & $0.0816\pm0.0088$, $2.3$ $(2)$ & --- & ---\\
\rowcolor{qsgray!14}
$4$   & $7$ & $0.0703$ & $0.0806\pm0.0057$, $0.3$ $(4)$ & $0.0774\pm0.0065$, $0.10$ $(3)$ & $0.053\pm0.053$\\
\rowcolor{qsgray!14}
$8$   & $5$ & $0.0658$ & $0.0615\pm0.0046$, $1.2$ $(2)$ & --- & ---\\
\bottomrule
\end{tabular}
\end{table}

The $55\%$ excess and the coefficient $A$ are parameter-specific. For this
network, the $O(1/r)$ correction is observed at all five costs, and it is amplified
exponentially in the population scale.

%=====================================================================
\section{Conclusion}
\label{sec:conclusion}

Within its stated numerical accuracy, the calculation establishes the central
point: a parameter that leaves every deterministic equilibrium and stability
type unchanged can alter the extinction barrier, and hence, subject to the exit
estimates of Section~\ref{sec:barrier}, the mean extinction time by a factor
exponential in the population size. Eliminating the fast signal before constructing the action
removes the joint cell-signal history that produces this difference. Rare
events are therefore not determined by the deterministic landscape alone, and a
reduction that is harmless for the equilibria of a network need not be harmless
for its exit times.

The two numerical routes are consistent, but they do not remove every source of
uncertainty. The minimization of \eqref{eq:discrete-action} is nonconvex, hence
the value returned by the optimizer bounds the infimum in
\eqref{eq:target} from above, as the exact action of the interpolated paths
confirms at the five rates checked, but it need not attain it. The calibrations of Section~\ref{sec:mam} are local, in the
one-dimensional case and, through \eqref{eq:hessian}, at $U_{\mathrm{on}}$ at
finite $r$, so neither verifies that the finite-$r$ optimizer has found the
global minimum. Proposition~\ref{prop:saddle-off} identifies the saddle as
the least-cost entrance to the basin of extinction, and the agreement with the
threshold experiment at $r=1,2,4,8$ gives no indication of a cheaper route
there; neither shows that the computed path is a global minimizer. At $r=0.5$
the threshold computation shows that a different stopping set can have a
smaller computed barrier. Three limitations therefore remain separate: global
minimization of the action, the relation between the saddle barrier and
stopping sets other than $R_{\mathrm{off}}$, and the metastable exit estimates for the jump process,
including the effect of the coordinate axes. For the extinction time itself the
last is supported numerically by the campaign of Section~\ref{sec:ssa}, over
the simulated population sizes only. Likewise the finite-$T$ actions approach \eqref{eq:target} from above,
and Section~\ref{sec:mam} shows no change at the reported precision
between $T=40$ and $T=60$, which controls the horizon truncation numerically
but is not an analytical error bound.

The prefactor $C_r(N)$ in \eqref{eq:exit-asymptotic} is not resolved over the
available population sizes: for the extinction time the fitted exponent $b_r$
lies between $-0.62$ and $-0.14$, and it is correlated with the slope at
$-0.99$, hence we do not claim a power-law prefactor. Resolving it by brute
force would need considerably larger $N$; the alternative is an
Eyring-Kramers analysis for this non-reversible jump process. Finally, the
generic momentum inversion and envelope gradient extend directly to more
channels and coordinates, but in higher dimensions additional saddles introduce
distinct local minima, and such networks require multiple initial paths and,
where necessary, a geometric minimum-action or string
method~\cite{heymann2008geometric}.

\appendix
\renewcommand{\theequation}{\thesection.\arabic{equation}}
\setcounter{equation}{0}
\section{Proof of Proposition~\ref{prop:saddle-off}}
\label{app:action}

Throughout, $c\in[0.2,0.5]$ and $r>0$ are fixed, $\Phi_t$ denotes the flow of
\eqref{eq:mean-field}, and $V=V_r^Q$. Recall from Section~\ref{sec:off-target}
that $b_0<d_0+c$, that $\bar F$ has exactly two positive zeros, both simple,
and that the equilibria in $[0,\infty)^2$ are $U_{\mathrm{off}}$, the
hyperbolic saddle $U_\ast$ and $U_{\mathrm{on}}$. At $U_\ast$ the Jacobian is
\[
 J_r=\begin{pmatrix}
 -\theta x^\ast & x^\ast b'(w^\ast)\\
 r\alpha_\Coop & -r\kappa
 \end{pmatrix},
\]
with eigenvalues $\mu_s<0<\mu_u$. An eigenvector for $\mu$ is
$(x^\ast b'(w^\ast),\,\mu+\theta x^\ast)$, which has two positive components
for $\mu_u$.

\begin{lemma}
\label{lem:basin-facts}
\begin{enumerate}
 \item[(i)] Every solution started in $Q$ converges to $U_{\mathrm{off}}$,
 $U_\ast$ or $U_{\mathrm{on}}$. Consequently
 $\partial\mathcal B_{\mathrm{on}}(r)\cap Q\subset W^s(U_\ast)$.
 \item[(ii)] The branch of the unstable manifold of $U_\ast$ that leaves
 towards $x<x^\ast$ lies in $Q$ and converges to $U_{\mathrm{off}}$.
\end{enumerate}
\end{lemma}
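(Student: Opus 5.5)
For (i), I will first show that every forward orbit in $Q$ is bounded, so that its $\omega$-limit set is nonempty and compact, and then use planar theory to rule out periodic orbits. Boundedness follows from the logistic term. Since $b\le b_0+b_1$ and $d(x)\ge d_0+c+\theta x$, we have $\dot x<0$ whenever $x>(b_0+b_1-d_0-c)/\theta=:\bar x$. Hence $x(t)$ is eventually below $\bar x+1$. Once that holds, $\dot w\le r(\alpha_\Coop(\bar x+1)-\kappa w)$, so $w$ is eventually bounded as well. Both axes are invariant: $x=0$ is invariant, and $w=0$ with $x>0$ is left immediately. Orbits in $Q$ therefore stay in $Q$ for all forward time, and their $\omega$-limit sets lie in a compact subset of $[0,\infty)^2$.

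To exclude periodic orbits I will use the Dulac function $B=1/x$ on $Q$:
\[
 \operatorname{div}(BF)=\partial_x\bigl(b(w)-d(x)\bigr)+\partial_w\bigl(r(\alpha_\Coop x-\kappa w)/x\bigr)=-\theta-r\kappa/x<0 .
\]
By the Bendixson--Dulac criterion there is no periodic orbit in $Q$, and no homoclinic or heteroclinic cycle bounding a region of $Q$. An $\omega$-limit set could still touch the axes, so I will use the Poincaré--Bendixson theorem on the closed quadrant. The only equilibria there are $U_{\mathrm{off}},U_\ast,U_{\mathrm{on}}$, as recorded in Section~\ref{sec:off-target}. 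A cycle of connections through $U_{\mathrm{off}}$ is impossible: $U_{\mathrm{off}}$ is a stable node by Proposition~\ref{prop:r-independence}(ii), so no orbit leaves it. A cycle of connections through $U_{\mathrm{on}}$ is impossible for the same reason, since it is asymptotically stable. A cycle through $U_\ast$ alone would be a homoclinic loop inside $Q$, which Dulac excludes. Hence every $\omega$-limit set is a single equilibrium.

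The consequence in (i) then follows. A point $u\in\partial\mathcal B_{\mathrm{on}}\cap Q$ is not in $\mathcal B_{\mathrm{on}}$, because that set is open. It is also not in $\mathcal B_{\mathrm{off}}$, because $\mathcal B_{\mathrm{off}}$ is open too: a neighbourhood of the stable node $U_{\mathrm{off}}$ is absorbed, for example $R_{\mathrm{off}}$ by Proposition~\ref{prop:off-target}, and continuity of the flow gives openness. The only remaining possibility is that the orbit of $u$ converges to $U_\ast$, i.e.\ $u\in W^s(U_\ast)$.

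For (ii), I expect the main obstacle to be showing that the branch stays in $Q$ and does not converge to $U_{\mathrm{on}}$. The unstable eigenvector $(x^\ast b'(w^\ast),\mu_u+\theta x^\ast)$ has both components positive, so the branch leaving towards $x<x^\ast$ starts in the open region $\{x<x^\ast,\ w<w^\ast\}$. I plan to show that the set
\[
 S=\{0<x<x^\ast,\ 0<w<\bar w(x)\}
\]
is forward invariant. The direction of the unstable eigenvector at $U_\ast$ must then be compared with the line $w=\bar w(x)$ to confirm that the branch starts inside $S$. I will do this by checking that the slope $(\mu_u+\theta x^\ast)/(x^\ast b'(w^\ast))$ is less than $\alpha_\Coop/\kappa$. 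This is equivalent to evaluating $\det(J_r-\mu I)$ along the ray, and the inequality should follow from $\bar F'(x^\ast)>0$.

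For invariance, note that in $S$ we have $w<\bar w(x)$, so $b(w)<b(\bar w(x))<d(x)$ because $\bar F<0$ on $(0,x^\ast)$. Hence $\dot x<0$ in $S$. On the edge $w=\bar w(x)$ we have $\dot w=0$ while $\dot x<0$, and since $\bar w$ is increasing the flow enters $S$ across that edge. On the edge $w=0$ we have $\dot w>0$. The edge $x=0$ is invariant. Therefore $S$ is forward invariant, $x$ decreases along the branch, and the branch remains in $Q$. By (i) it converges to an equilibrium in $\overline S$. Since $x(t)<x^\ast$ and $x$ is decreasing, the limit cannot be $U_\ast$ or $U_{\mathrm{on}}$, so the limit is $U_{\mathrm{off}}$.

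If the slope comparison turns out to be delicate, I would instead use the larger region $\{x<x^\ast,\ w<w^\ast\}$ and argue that an orbit confined there cannot reach $U_{\mathrm{on}}$, and can return to $U_\ast$ only by forming a homoclinic loop, which Dulac excludes.
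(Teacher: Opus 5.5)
Your part (i) follows the paper's argument: boundedness, the Dulac function $1/x$, Poincar\'e--Bendixson, and openness of both stable basins. Add one step, as the paper does: an $\omega$-limit point on $\{w=0,\ x>0\}$ is impossible, because the backward orbit through it leaves $[0,\infty)^2$. That is what places any periodic orbit or homoclinic loop inside $Q$, where Dulac applies.

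The gap is in your main plan for (ii), which fails for two independent reasons.

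First, $S=\{0<x<x^\ast,\ 0<w<\bar w(x)\}$ is not forward invariant. On the edge $w=\bar w(x)$ you have $\dot w=0$ and $\dot x=\bar F(x)<0$. So $g:=w-\bar w(x)$ satisfies $\dot g=-(\alpha_\Coop/\kappa)\dot x>0$, and the flow \emph{leaves} $S$: moving left at constant height from an increasing curve puts you above it.

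Second, the branch does not start in $S$. The second row of $(J_r-\mu_u I)v=0$ gives the slope $s_u=r\alpha_\Coop/(r\kappa+\mu_u)<\alpha_\Coop/\kappa$. This is exactly the inequality you planned to verify, but for $x<x^\ast$ a slope smaller than that of $\bar w$ puts the branch \emph{above} the line $w=\bar w(x)$. Your criterion runs the wrong way.

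In fact $\zeta'(x^\ast)<s_u<\alpha_\Coop/\kappa$, where $w=\zeta(x)$ with $\zeta=b^{-1}\circ d$ is the nullcline $\dot x=0$. So the branch runs between the two nullclines. This is why the paper bounds its region by $\zeta$ rather than $\bar w$. On $w=\zeta(x)$ one has $\dot x=0$ and $\dot w=r\kappa[\bar w(x)-\zeta(x)]<0$, so the flow enters; $\dot x<0$ inside; and $s_u-\zeta'(x^\ast)=\mu_u/(x^\ast b'(w^\ast))>0$ puts the branch inside.

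Your fallback, by contrast, is a valid alternative once you carry it out:
\begin{itemize}
\item The box $\{0<x<x^\ast,\ 0<w<w^\ast\}$ is forward invariant. On $x=x^\ast$ you have $\dot x=x^\ast[b(w)-b(w^\ast)]<0$, on $w=w^\ast$ you have $\dot w=r\alpha_\Coop(x-x^\ast)<0$, and on $w=0$ you have $\dot w>0$.
\item The branch starts in the box, because the unstable eigenvector has two positive components.
\item By (i), the limit is then $U_{\mathrm{off}}$ or $U_\ast$.
\end{itemize}
Since $\dot x$ has no sign in the box, excluding $U_\ast$ really does need your homoclinic/Dulac argument. Alternatively, $\mu_s+\theta x^\ast=-r\kappa-\mu_u<0$, so the stable eigenvector has components of opposite signs and $W^s(U_\ast)$ misses the open lower-left quadrant near $U_\ast$.

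The box is simpler to write down than the paper's curved region. The curved region, in exchange, gives monotone $x$ and needs no loop argument. As submitted, though, your primary argument for (ii) is wrong, and the correct one appears only as a one-line contingency without the invariance check.
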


\begin{proof}
(i) Let $u\in Q$. Since $\dot x\le x(b_0+b_1-\theta x)$ and
$\dot w\le r(\alpha_\Coop x-\kappa w)$, the solution is bounded, so its
$\omega$-limit set $\omega(u)$ is nonempty, compact, connected and invariant.
If $\omega(u)$ contains a point $(0,w)$, it contains the solution through it,
which converges to $U_{\mathrm{off}}$ along the axis; if it contains $(x,0)$
with $x>0$, the backward solution through that point leaves $[0,\infty)^2$,
since $\dot w=r\alpha_\Coop x>0$ there, which is impossible. Since the stable
equilibria attract a neighbourhood, $\omega(u)$ is then $\{U_{\mathrm{off}}\}$,
$\{U_{\mathrm{on}}\}$, or a subset of $Q$ that contains no equilibrium other
than $U_\ast$.
In the last case, with the Dulac function $g=1/x$,
\[
 \operatorname{div}(gF)
 =\partial_x\bigl[b(w)-d(x)\bigr]
 +\partial_w\Bigl[\frac{r(\alpha_\Coop x-\kappa w)}{x}\Bigr]
 =-\theta-\frac{r\kappa}{x}<0
\]
on $Q$, which is convex, hence $Q$ contains neither a periodic orbit nor a
homoclinic loop at $U_\ast$, and by the Poincar\'e-Bendixson theorem
$\omega(u)=\{U_\ast\}$. Finally, the basins of the two stable equilibria are
open and disjoint, so a point of $\partial\mathcal B_{\mathrm{on}}(r)\cap Q$
lies in neither of them and converges to $U_\ast$.

(ii) For $0\le x\le x^\ast$ we have
$b_0<d(0)\le d(x)\le d(x^\ast)=b(w^\ast)<b_0+b_1$, hence
$\zeta:=b^{-1}\circ d$ is well defined, positive and $C^1$ on $[0,x^\ast]$,
with $\zeta(x^\ast)=w^\ast$ and $\zeta'=\theta/b'(\zeta)$; the curve
$w=\zeta(x)$ is the nullcline $\dot x=0$. Since $\bar F<0$ on $(0,x^\ast)$ and
$b_0<d_0+c$, also $\bar w(x)<\zeta(x)$ on $[0,x^\ast)$. Let
$\mathcal O:=\{(x,w):0<x<x^\ast,\ 0<w<\zeta(x)\}$. The edge $x=0$ is
invariant; on the remaining open boundary arcs the field points inward: on $w=\zeta(x)$ we have $\dot x=0$ and
$\dot w=r\kappa[\bar w(x)-\zeta(x)]<0$, on $x=x^\ast$ with $w<w^\ast$ we have
$\dot x=x^\ast[b(w)-b(w^\ast)]<0$, and on $w=0$ we have $\dot w>0$. Hence
$\mathcal O$ is forward invariant. Inside $\mathcal O$, $\dot x<0$. By (i),
every trajectory in $\mathcal O$ converges to an equilibrium; since
$x(t)\le x(0)<x^\ast$, that equilibrium must be $U_{\mathrm{off}}$. Near $U_\ast$ the unstable
branch towards $x<x^\ast$ has slope
$s_u=(\mu_u+\theta x^\ast)/(x^\ast b'(w^\ast))$, and
$s_u-\zeta'(x^\ast)=\mu_u/(x^\ast b'(w^\ast))>0$. Hence it lies below the
nullcline for $x<x^\ast$ close to $x^\ast$, i.e. in $\mathcal O$, and it
converges to $U_{\mathrm{off}}$.
\end{proof}

\begin{lemma}
\label{lem:action-properties}
\begin{enumerate}
 \item[(i)] $\mathbb L(U,v)\ge0$ and $\mathbb L(U,F(U))=0$ for $U\in Q$ and
 $v\in\R^2$. Consequently $V(u,\Phi_t(u))=0$ for $u\in Q$ and $t>0$.
 \item[(ii)] $V(u,z)\le V(u,y)+V(y,z)$ for $u,y,z\in Q$.
 \item[(iii)] For every compact convex $K\subset Q$ there is $C_K<\infty$
 such that $V(u,z)\le C_K\lvert u-z\rvert$ for $u,z\in K$.
\end{enumerate}
\end{lemma}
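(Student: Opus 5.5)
The plan is to prove the three items in order, using only the structure of the Hamiltonian \eqref{eq:ham} and the explicit Lagrangian \eqref{eq:explicit-L}.

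For (i), note that $p\mapsto\mathbb H(U,p)$ is convex, with $\mathbb H(U,0)=0$ and $\nabla_p\mathbb H(U,0)=F(U)$. Taking $p=0$ in the supremum gives $\mathbb L(U,v)\ge0$. For the other half, convexity gives $\mathbb H(U,p)\ge\inner{p}{F(U)}$, so $\inner{p}{F(U)}-\mathbb H(U,p)\le0$ for every $p$, which yields $\mathbb L(U,F(U))=0$. For the consequence, fix $u\in Q$ and $t>0$. By Lemma~\ref{lem:basin-facts}(i) the solution stays bounded, and it stays in $Q$: the axis $\{x=0\}$ is invariant, so $x$ cannot reach zero in finite time, and $\dot w=r\alpha_\Coop x>0$ on $\{w=0\}$. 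The deterministic path $s\mapsto\Phi_s(u)$, $s\in[0,t]$, is therefore an admissible competitor of action zero. Together with $\mathbb L\ge0$ this gives $V(u,\Phi_t(u))=0$.

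For (ii), I would concatenate paths. Given $\varepsilon>0$, choose a path $U_1$ on $[0,T_1]$ from $u$ to $y$ and a path $U_2$ on $[0,T_2]$ from $y$ to $z$, both in $Q$, with actions within $\varepsilon$ of $V(u,y)$ and $V(y,z)$. Their concatenation on $[0,T_1+T_2]$ is absolutely continuous, stays in $Q$ and has additive action. Since the free-time infimum allows any total duration, letting $\varepsilon\to0$ gives the inequality.

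Item (iii) is the main step. For $u,z\in K$ with $u\ne z$, I would use the straight segment $U(s)=u+s(z-u)/\lvert z-u\rvert$ at unit speed, run over time $T=\lvert z-u\rvert$. Convexity of $K$ keeps the segment inside $K\subset Q$, and its action is at most $\lvert z-u\rvert\sup_{U\in K,\lvert v\rvert\le1}\mathbb L(U,v)$. It then remains to show that this supremum $C_K$ is finite. By \eqref{eq:explicit-L}, $\mathbb L(U,v)=\ell(v_x;\lambda_1,\lambda_2)+\ell(v_w;\lambda_3,\lambda_4)$, where the intensities $xb(w)$, $xd(x)$, $r\alpha_\Coop x$, $r\kappa w$ are continuous on $K$ and strictly positive there, since $K$ is a compact subset of $Q$. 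Hence they are bounded above and bounded away from zero on $K$. The function $\ell$ in \eqref{eq:explicit-legendre} is continuous in $(v,\lambda_+,\lambda_-)$ on $\R\times(0,\infty)^2$, so it is bounded on the compact set $[-1,1]\times[\lambda_{\min},\lambda_{\max}]^2$, and $C_K<\infty$ follows. The case $u=z$ is trivial, since a constant path has action $T\,\mathbb L(u,0)\to0$ as $T\to0$.

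The only point needing care is this uniform lower bound on the intensities, which is exactly why $K$ must be a compact subset of the open quadrant: near the axes, $\ell$ blows up logarithmically.
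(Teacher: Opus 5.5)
Your proposal is correct and follows the paper's proof essentially step for step: $p=0$ and convexity of $\mathbb H$ give (i), concatenation gives (ii), and (iii) uses the unit-speed segment together with a bound on $\mathbb L$ over $K\times\{\lvert v\rvert\le1\}$ obtained from \eqref{eq:explicit-legendre}--\eqref{eq:explicit-L}. Your additional details are correct and only spell out what the paper leaves implicit, namely forward invariance of $Q$ under the flow, continuity of $\ell$ on $\R\times(0,\infty)^2$, and the trivial case $u=z$.
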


\begin{proof}
(i) Taking $p=0$ in the supremum gives $\mathbb L\ge0$, since
$\mathbb H(U,0)=0$. By convexity and $\nabla_p\mathbb H(U,0)=F(U)$
(Remark~\ref{rem:H-contains}), $\mathbb H(U,p)\ge\inner{F(U)}{p}$, hence
$\mathbb L(U,F(U))=0$, and the deterministic solution, which stays in $Q$, has
zero action.
(ii) Concatenate two paths whose actions are within $\varepsilon$ of $V(u,y)$
and $V(y,z)$, and let $\varepsilon\to0$.
(iii) On $K$ the four intensities are continuous and bounded away from zero,
hence by \eqref{eq:explicit-legendre} and \eqref{eq:explicit-L} the local
rate $\mathbb L$ is bounded by some $C_K$ on $K\times\{\lvert v\rvert\le1\}$.
The segment from $u$ to $z$ traversed at unit speed stays in $K$ and has action
at most $C_K\lvert u-z\rvert$.
\end{proof}

\begin{proof}[Proof of Proposition~\ref{prop:saddle-off}]
Let $K\subset Q$ be a closed ball of radius $\rho>0$ centred at $U_\ast$, and
let $C_K$ be as in Lemma~\ref{lem:action-properties}(iii).

\emph{Points of the separatrix are no cheaper than the saddle.} Let
$y\in W^s(U_\ast)\cap Q$ and $0<\varepsilon<\rho$, and choose $t>0$ with
$\lvert\Phi_t(y)-U_\ast\rvert\le\varepsilon$. By
Lemma~\ref{lem:action-properties},
\[
 \Delta V(r)
 \le V(y)+V\bigl(y,\Phi_t(y)\bigr)+V\bigl(\Phi_t(y),U_\ast\bigr)
 \le V(y)+C_K\varepsilon ,
\]
and $\varepsilon\to0$ gives $V(y)\ge\Delta V(r)$.

\emph{Lower bound.} Let $z\in R\cap Q$ and let $U\in\mathrm{AC}([0,T];Q)$ run
from $U_{\mathrm{on}}$ to $z$. Since $U_{\mathrm{on}}$ lies in the open set
$\mathcal B_{\mathrm{on}}(r)$ and $z$ does not,
$t_1:=\inf\{t:U(t)\notin\mathcal B_{\mathrm{on}}(r)\}$ satisfies $0<t_1\le T$,
and $y:=U(t_1)\in\partial\mathcal B_{\mathrm{on}}(r)\cap Q$, hence
$y\in W^s(U_\ast)$ by Lemma~\ref{lem:basin-facts}(i). Since $\mathbb L\ge0$,
\[
 \mathcal S_T(U)\ge\mathcal S_{t_1}\bigl(U|_{[0,t_1]}\bigr)\ge V(y)\ge\Delta V(r),
\]
and taking the infimum over $T$, $U$ and $z$ gives
$\inf_{z\in R\cap Q}V(z)\ge\Delta V(r)$.

\emph{Upper bound.} Let $\Gamma$ be the unstable branch of
Lemma~\ref{lem:basin-facts}(ii). Given $0<\varepsilon<\rho$, choose
$u_\varepsilon\in\Gamma$ with $\lvert u_\varepsilon-U_\ast\rvert\le\varepsilon$.
Since $R$ is a relative neighbourhood of $U_{\mathrm{off}}$, there is $s>0$ with
$z_\varepsilon:=\Phi_s(u_\varepsilon)\in R\cap Q$, and by
Lemma~\ref{lem:action-properties}
\[
 V(z_\varepsilon)
 \le V(U_\ast)+V(U_\ast,u_\varepsilon)+V(u_\varepsilon,z_\varepsilon)
 \le\Delta V(r)+C_K\varepsilon .
\]
Hence $\inf_{z\in R\cap Q}V(z)\le\Delta V(r)+C_K\varepsilon$ for every small
$\varepsilon>0$, and $\varepsilon\to0$ gives the upper bound.
\end{proof}

\bibliographystyle{plain}
\bibliography{references}

\end{document}